\numberwithin{equation}{section}
\newtheorem{definition}{Definition}[section]
\newtheorem{theorem}{Theorem}[section]
\newtheorem{proposition}{Proposition}[section]
\newtheorem{lemma}{Lemma}[section]
\newtheorem{corollary}{Corollary}[section]
\newtheorem{remark}{Remark}[section]
\newcommand{\be}{\begin{equation}}
\newcommand{\ee}{\end{equation}}
\newcommand{\leftn}{|}
\newcommand{\rightn}{|}
\newcommand{\leftdn}{||}
\newcommand{\rightdn}{||}
\newcommand{\bnd}{{W^{\lambda,\lambda/2}_p(\partial M_T)}}
\newcommand{\sob}{{W^{2,1}_p(M_T)}}
\newcommand{\sobb}{{W^{2,1}_p(M_T)}}
\newcommand{\Lpcal}{{\mathcal L_p^{\lambda,\frac{\lambda}{2}}(\partial M_T)}}
\newcommand{\Lpcall}{{\mathcal L_p^{\lambda,\frac{\lambda}{2}}(V_T)}}
\newcommand{\Lpcallab}{{\mathcal L_p^{\alpha,\beta}(V_T)}}
\newcommand{\Lpcallx}{{\mathcal L_p^{\alpha,0}(V_T)}}
\newcommand{\Lpcallt}{{\mathcal L_p^{0,\beta}(V_T)}}
\newcommand{\Lsob}{{L^{2,1}_p(M_T)}}
\DeclareMathOperator{\dive}{\mathrm{div}}
\DeclareMathOperator{\Ric}{\mathrm{Ric}}
\DeclareMathOperator{\Rm}{\mathrm{Rm}}
\DeclareMathOperator{\tr}{\mathrm{tr}}
\DeclareMathOperator{\dete}{\mathrm{det}}
\title[The Ricci flow on manifolds with boundary]{The Ricci flow on manifolds with boundary}
\author[]{Panagiotis Gianniotis}
\address{Department of Mathematics\\
University College London\\
25 Gordon Street, London WC1E 6BT\\}
\email{p.gianniotis@ucl.ac.uk}
\begin{document}
\begin{abstract}
We study the short-time existence and regularity of solutions to a boundary value problem for the Ricci-DeTurck equation on a manifold with boundary. Using this, we prove the short-time existence and uniqueness of the Ricci flow prescribing the mean curvature and conformal class of the boundary, with arbitrary initial data. Finally, we establish that under suitable control of the boundary data the flow exists as long as the ambient curvature and the second fundamental form of the boundary remain bounded.
\end{abstract}

\maketitle

\section{Introduction}
The aim of this paper is to study the deformation for a short period of time of a Riemannian metric $g^0$ on a compact Riemannian manifold with boundary using the Ricci flow
\begin{eqnarray}
  \partial_t g=-2\Ric(g), \label{eq2}
\end{eqnarray}
which was introduced by Hamilton in \cite{Ham1}. He established the short-time existence and uniqueness of solutions with $g(0)=g^0$ and used it to study three-dimensional manifolds admitting metrics with positive Ricci curvature. Later on, Shi in \cite{Shi1} proved the short time existence of the flow for complete manifolds with uniformly bounded Riemann tensor. Ever since, it has been proven to be a valuable tool in the study of the interaction between geometry and topology, providing a natural geometric deformation of Riemannian manifolds.

A natural question to ask is whether one can deform the geometry of a manifold with boundary using the Ricci flow, and what would be appropriate boundary conditions. The obstacle, as in the case without boundary, is the diffeomorphism invariance of the Ricci tensor which is why the equation is not  parabolic. One needs to solve a modified parabolic equation first, as DeTurck did in \cite{DeTurck} and then relate its solution to the Ricci flow. In the case of manifolds with boundary though, the challenge is to impose boundary conditions that on the one hand will lead to a parabolic boundary value problem for the modified equation, and at the same time tie well with the geometric character of the Ricci flow. 

The first work in this direction was by Y.Shen in \cite{YShen}, where he established a short-time existence result for compact manifolds with umbilic boundary. Moreover, he extended Hamilton's result in \cite{Ham1} to the case of manifolds with totally geodesic boundary. The convex (and umbilic) case was studied later by Cortissoz in \cite{Cortissoz}. However, one would like to deform more arbitrary metrics than in \cite{YShen}. To this direction, Pulemotov in \cite{Pulemotov} proved a short-time existence result for manifolds with boundary of merely constant mean curvature.

More work has been done on the two-dimensional Ricci flow, and the closely related Yamabe flow. Both have been studied under Neumann-type boundary conditions. See for instance the contributions of Brendle in \cite{Brendle1}, \cite{Brendle2}, \cite{Brendle3}, Tong Li in \cite{Tong} and Cortissoz in \cite{Cort}. Also, Giesen and Topping in \cite{GiesTopExistence} study the Ricci flow on general incomplete surfaces from a different point of view. They show existence of solutions which become instantaneously complete for positive time and completely classify their asymptotic behaviour. Moreover, Topping in \cite{TopUniqueness} shows that such flows depend uniquely on the initial data.

Heuristically,  the Ricci flow is closely related to the corresponding ``elliptic" problem, the Einstein equations. Boundary value problems for Einstein metrics have been studied for instance by Anderson in \cite{Anderson1},  Anderson and Khuri in \cite{AndKhuri}, Schlenker in \cite{Schlenker} and Reula in \cite{reula}.  In particular, in \cite{Anderson1} it is shown that the conformal class and the mean curvature of the boundary give elliptic boundary conditions for the Bianchi-gauged Einstein equation. Notice that in the case of three-dimensional manifolds with boundary, solving such a boundary value problem also gives rise to  immersions of the boundary data (conformal class and mean curvature) in the canonical simply connected spaces of constant curvature. We refer the reader to \cite{Anderson2} for details on this point of view. A parabolic approach may provide further understanding of these geometric problems.

A solution to the Ricci flow is not expected to be determined uniquely by the mean curvature only, as in \cite{Pulemotov}, which hints that it should be supplemented with additional boundary data. In the following, we study boundary value problems for the Ricci-DeTurck flow and the  Ricci flow, under the boundary conditions proposed in \cite{Anderson1}. The main result of this study is a local existence and uniqueness result for the Ricci flow on manifolds with boundary. To the knowledge of the author, this is the first result which allows the flow to start from an arbitrary initial metric.

The methods used can also be applied to study boundary value problems for geometric flows related to static metrics in General Relativity (see \cite{AndKhuri}). However, we won't pursue this direction here, as we plan to discuss it in a future paper.

Let $M^{n+1}$ be a compact $n+1$ dimensional manifold with boundary $\partial M$ and interior $M^o$. If $g$ is a smooth Riemannian metric on $M$ we will denote by $\mathcal H(g)$ the mean curvature of the boundary and by $u^T$ the part of the tensor $u$,  tangential to the boundary. Moreover, if $\gamma$ is some Riemannian metric on $\partial M$, let $[\gamma]$ be its conformal class, namely 
$$[\gamma]=\left\{ \gamma'=\phi^2\gamma \textrm{\;, for all positive functions $\phi$ on $\partial M$}\right\}.$$

Now, let $g^0$ be an arbitrary smooth Riemannian metric on $M$, $\gamma(x,t)\in C^\infty(\partial M\times [0,+\infty))$ a smooth time-dependent family of metrics on $\partial M$ and a function  $\eta(x,t)\in C^\infty (\partial M\times [0,+\infty))$. We assume that they satisfy the zeroth order compatibility conditions
\begin{eqnarray}
\begin{array}{ccc}
\mathcal H (g^0)&=&\eta|_{t=0}\label{zeroorder}\\
\left[(g^0)^T\right]&=&[\gamma|_{t=0}]. 
\end{array}
\end{eqnarray}

Moreover, let $\kappa>0$ be the constant defined in Section \ref{RDT_proof}, which bounds the $W^{2,p}$ norm of $g^0$ and appropriate H\"older norms of $\gamma$ and $\eta$. For the precise definitions of the function spaces used and the restrictions on the values of $p,\epsilon$ and $\alpha$ appearing below the reader is invited to consult Section \ref{notation}. 

\begin{theorem}\label{Ricci-DTrk}
Let $M^{n+1},g^0,\gamma,\eta$ be as above. Consider an arbitrary family of background metrics $\tilde g\in C^{\infty}(M\times [0,+\infty))$ which satisfies in addition the zeroth order compatibility condition $\tilde g(0)=g^0$. Take $K>0$ and set
 $$\Lambda=\max\left\{\kappa, \sup_t \left\{ \leftdn \tilde g(t) -g^0\rightdn_{W^{2,p}(M^o)}+\leftdn \partial_t \tilde g(t)\rightdn_{L_p(M^o)}\right\}\right\}.$$
Then, there exists a $T>0$ which depends only on $\Lambda>0$ and $K>0$ and a unique solution $g(t)$, $t\in [0,T]$, of the Ricci-DeTurck equation, 
\be
\partial_t g=-2\Ric(g)+\mathcal L_{\mathcal W(g,\tilde g)} g,\label{eq1}
\ee
where $\mathcal W(g,\tilde g)_l=g_{lr}g^{pq}(\Gamma(g)^r_{pq}-\Gamma(\tilde g_t)^r_{pq})$, satisfying on $\partial M\times [0,T]$ the boundary conditions:
\begin{eqnarray}
\mathcal W(g,\tilde g)&=&0,\label{rdtbdata1}\\
\mathcal H(g)&=&\eta,\label{rdtbdata2}\\
\left[g^T\right]&=&[\gamma].\label{rdtbdata3}
\end{eqnarray}
and the estimate $\leftdn g-g^0 \rightdn_\sob\leq K$. The solution is $C^\infty$ away from the corner $\partial M\times 0$, and extends on $M\times [0,T]$ as a $C^{1+\alpha,\frac{1+\alpha}{2}}$ family of metrics. Moreover, if the data $g^0$, $\gamma$, $\eta$ and $\tilde g$ satisfy the necessary higher order compatibility conditions (see Section \ref{comp}), then $g$ is $C^{k+\alpha,\frac{k+\alpha}{2}}$  up to $\partial M\times 0$.
\end{theorem}

Now, Theorem \ref{Ricci-DTrk} allows us to prove in Section \ref{rflow} a short-time existence result for the Ricci flow on an arbitrary compact Riemannian manifold with boundary. Here, the existence time of the flow is controlled in terms of bounds on the geometry of the initial data.

\begin{theorem}\label{Ricci_flow}
Let $g^0$, $\gamma$, $\eta$ as in Theorem \ref{Ricci-DTrk}, and suppose 
\begin{eqnarray}
\sup_M |\Ric(g^0)|_{g^0}+\sup_{\partial M}|\Ric((g^0)^T)|_{g^0}&\leq& C, \label{ex1}\\
i_{g^0},  i_{(g^0)^T} , i_{b,g^0} &\geq& C^{-1},\label{ex2} \\
\textrm{diam}(M,g^0)&\leq& C, \label{ex3}\\
|\gamma|_{1+\epsilon,\frac{1+\epsilon}{2}}+|\gamma^{-1}|_0+\sup_{\partial M\times 0}|R(\gamma)|+\left|\eta \right|_{\epsilon,\frac{\epsilon}{2}}&\leq& C, \label{ex4}\\
C^{-1}\gamma|_{t=0}\leq (g^0)^T &\leq& C\gamma|_{t=0} \label{ex5}
\end{eqnarray}
for some $C>1$. Then, there exists a smooth solution  $g(t)$ to (\ref{eq2}), for $0<t\leq T$, that satisfies on $\partial M\times (0,T]$ the boundary conditions (\ref{rdtbdata2})-(\ref{rdtbdata3}) and $T>0$ depends only on $C$.

Moreover, as $t\searrow 0$, $g(t)$ converges in the $C^{1,\alpha}$ Cheeger-Gromov sense to $g^0$ and $C^\infty$ away from the boundary. Namely, there exist a smooth family of diffeomorphisms $\phi_t$ of $M$, $t>0$, such that $\phi_t^* g(t)\rightarrow g^0$.

Also, if $g^0, \gamma , \eta$ satisfy the necessary higher order compatibility conditions for the Ricci tensor $\Ric$ to be in the class $C^k(\overline{M_T})$  (see Section \ref{rflow}), then
\begin{enumerate}
\item As $t\searrow 0$, $g(t)$ converges to $g^0$ in the $C^{k+2,\alpha}$ Cheeger-Gromov sense.
\item $g\in C^k(\overline{M_T})\cap C^{\infty}(M^o\times [0,T])$, and there exists a $C^{k+1}$ diffeomorphism $\phi$  of $M$ which fixes the boundary and is $C^\infty$ in the interior such that $g(0)=\phi^* g^0$. Also, if $k\geq 1$, $\phi$ is $C^{k+2}$ and $g\in C^{k+1}(\overline{M_T})$. 
\item The Riemann tensor $\Rm$ is in $C^k(\overline{M_T})$ and $\Rm(g(0))=\phi^*\Rm(g^0)$.
\end{enumerate}
\end{theorem}

Here, $i_{g^0},  i_{(g^0)^T}$ denote the injectivity radii of $(M,g^0)$, $(\partial M,(g^0)^T)$ respectively and $i_{b,g^0}$ denotes the ``boundary injectivity radius\rq\rq{}, namely the maximal size of the collar neighbourhood of $\partial M$ in which the normal exponential map from the boundary is a local diffeomorphism. See Definition \ref{inj_radii}. Also, we write $R(\gamma)$ for the scalar curvature of $\gamma$.

We note that a version of Theorem \ref{Ricci_flow} in which the initial data are obtained in the usual sense, namely $g(0)=g^0$, does hold. However, such a solution will generally not be $C^\infty$ smooth up to the boundary even for positive time. This issue is related to the invariance of the equation under diffeomorphisms and is discussed in Remark \ref{bad2}.

We prove Theorem \ref{Ricci-DTrk} in Section \ref{dtrf} with a fixed-point argument,  following the method of Weidemaier in \cite{MR1163209} and applying Solonnikov's work on linear parabolic systems under general boundary conditions in \cite{MR0211083}. The main advantage compared to an implicit function theorem approach is that the study of the nonlinearities of the equation and the boundary conditions allows us to obtain uniform control on the existence time. 

Note that the control of the existence time obtained in Theorem \ref{Ricci-DTrk} does not tie well with the geometric nature of the Ricci flow, mainly because it involves norms which depend on the choice of the background smooth structure and metric. From this point of view, Theorem \ref{Ricci_flow} is more satisfactory, as the lower bound on the existence time depends only on the geometry of the initial data $g^0$ and norms of the boundary conditions. 

It is well known that incomplete solutions of the Ricci flow are in general not unique. On a manifold with boundary though, the boundary data (\ref{rdtbdata2})-(\ref{rdtbdata3}) allow us to obtain the following uniqueness result.
\begin{theorem}\label{uniqueness}
A  solution to the boundary value problem (\ref{eq2}),(\ref{rdtbdata2})-(\ref{rdtbdata3}) in $C^3(\overline M_T)$ is uniquely determined by the initial data $g^0$ and the boundary data $([\gamma],\eta)$.
\end{theorem}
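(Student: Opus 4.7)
The plan is to reduce the uniqueness of the Ricci flow with these boundary data to the uniqueness statement already provided by Theorem \ref{Ricci-DTrk} for the Ricci-DeTurck flow, by means of a boundary-adapted version of DeTurck's trick. Fix the background $\tilde g\equiv g^0$. Given two solutions $g_1,g_2\in C^3(\overline{M_T})$ of (\ref{eq2}),(\ref{rflbdata}), I would construct, for each $i=1,2$, a one-parameter family of diffeomorphisms $\phi_i(t):M\to M$ by solving the harmonic map heat flow from $(M,g_i(t))$ to $(M,\tilde g)$ with initial condition $\phi_i(0)=\mathrm{id}_M$ and Dirichlet boundary condition $\phi_i|_{\partial M\times[0,T]}=\mathrm{id}_{\partial M}$. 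Since $g_i\in C^3$, standard parabolic theory yields a short-time solution of sufficient regularity, after possibly shrinking $T$.

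Setting $\bar g_i(t):=(\phi_i(t)^{-1})^*g_i(t)$, the classical DeTurck--Hamilton computation shows that $\bar g_i$ solves the Ricci-DeTurck equation (\ref{eq1}) with background $\tilde g$, and moreover that the Dirichlet condition on $\phi_i$ translates precisely to the boundary condition $\mathcal W(\bar g_i,\tilde g)=0$ on $\partial M$. Because $\phi_i$ restricts to the identity on $\partial M$, pullback preserves the induced boundary metric, hence $\mathcal H(\bar g_i)=\eta$ and $[\bar g_i^T]=[\gamma]$; and $\bar g_i(0)=g^0$. Thus $\bar g_1$ and $\bar g_2$ are two solutions of the Ricci-DeTurck boundary value problem (\ref{eq1}),(\ref{rdtbdata}) with identical initial data.

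After further shrinking $T$ so that both $\bar g_i-g^0$ lie in a ball of radius $K$ in $\sob$ (which is possible since $g_i\in C^3(\overline{M_T})$, $g_i(0)=g^0$, and $\phi_i$ is close to the identity for small $T$), Theorem \ref{Ricci-DTrk} forces $\bar g_1=\bar g_2=:\bar g$. To conclude $g_1=g_2$, observe that the identity $g_i=\phi_i^*\bar g$ with $\phi_i(0)=\mathrm{id}$, together with the naturality of Ricci, implies that $\phi_i$ must satisfy a first-order ODE in $t$ driven by the DeTurck vector field $\mathcal W(\bar g,\tilde g)$; since this vector field vanishes on $\partial M$ (by the Ricci-DeTurck boundary condition), the flow preserves $\partial M$, and ODE uniqueness gives $\phi_1=\phi_2$, whence $g_1=g_2$ on $[0,T]$. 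A standard continuation argument then extends the equality to the full interval on which both solutions are defined.

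The main obstacle I anticipate is the bookkeeping at the boundary: one must verify that the harmonic map heat flow with Dirichlet data is solvable at the given regularity and produces $\bar g_i$ with enough regularity to lie in the uniqueness class of Theorem \ref{Ricci-DTrk}, and that each of the three Ricci-DeTurck boundary conditions is indeed recovered from the corresponding geometric structure on the Ricci flow side after pullback by $\phi_i$.
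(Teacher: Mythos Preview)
Your proposal is correct and follows essentially the same route as the paper: both argue via the harmonic map heat flow $\partial_t\phi_i=\Delta_{g_i(t),g^0}\phi_i$ with Dirichlet data $\phi_i|_{\partial M}=\mathrm{id}_{\partial M}$ and $\phi_i(0)=\mathrm{id}_M$, push forward to obtain Ricci--DeTurck solutions satisfying all three boundary conditions (the Dirichlet condition producing $\mathcal W=0$ via the identity $\Delta_{g_i,g^0}\phi_i=-\mathcal W(\bar g_i,g^0)\circ\phi_i$), invoke the Ricci--DeTurck uniqueness of Theorem~\ref{Ricci-DTrk}, and then recover $\phi_1=\phi_2$ from the resulting ODE $\partial_t\phi_i=-\mathcal W(\bar g,g^0)\circ\phi_i$. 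The paper additionally remarks that the first-order compatibility condition for the harmonic map heat flow holds automatically (since $\Delta_{g^0,g^0}\mathrm{id}_M=0$), which is what guarantees enough regularity of $\phi_i$ to place $\bar g_i$ in $W^{2,1}_p$; this is precisely the ``bookkeeping'' concern you flagged.
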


Theorems \ref{Ricci-DTrk}, \ref{Ricci_flow} and \ref{uniqueness} generalize to Theorem \ref{general}, where $\eta$ also depends on the metric $g^T$ induced on the boundary by $g(t)$.

Finally, in Sections \ref{example} and \ref{ext} we move towards the study of more global issues. In Section \ref{example} we demonstrate the necessity of the bound on the boundary injectivity radius in Theorem \ref{Ricci_flow}. We construct examples with flat initial data and uniformly controlled boundary conditions whose existence time becomes arbitrarily small. This is quite surprising, since on closed manifolds a curvature bound suffices to prevent such behaviour.  Section \ref{ext} is devoted in the proof of the following theorem, which is a continuation principle for the Ricci flow on manifolds with boundary.
\begin{theorem}\label{extend}
Let $g(t)$, $t\in [0,T)$ be a smooth ($C^{\infty}$) Ricci flow on $M$ with smooth boundary data $([\gamma(t)],\eta(t))$  defined for $t\in [0,T')$. Suppose $T<\infty$ be the maximal time of existence and $T<T'$. 
Then
$$\sup_{0\leq t < T} \left( \sup_{x\in M} |\Rm(g(t))|_{g(t)}+\sup_{x\in \partial M}|\mathcal A(g(t))|_{g(t)} \right) =+\infty.$$
\end{theorem}
\noindent\textbf{Acknowledgements:} The author would like to thank his adviser Michael Anderson for suggesting this problem and for valuable discussions and comments.

\section{Notation, definitions, background material}\label{notation}
Let $M^{n+1}$ be a smooth, compact, $n+1$ dimensional manifold with boundary $\partial M$, and interior $M^o$. We will use the notation $M_T=M^o\times (0,T)$, $\partial M_T=\partial M \times (0,T)$. 

\subsection{Function Spaces.}
We need to define the function spaces we will use. First, fix a smooth Riemannian metric $h$ on $M$ and denote by $\widehat\nabla$ its Levi-Civita connection. We also need to fix an open cover $\{U_s\}$ of $M$, and a collection of charts $\phi_s$ such that
\begin{eqnarray}
\phi_s: U_s\rightarrow B(0,1)\subset \mathbb R^{n+1} &&\textrm{\quad, if $U_s$ does not intersect the boundary}\nonumber\\
\phi_s: U_s\rightarrow B(0,1)^+\subset \mathbb R^{n+1}&&\textrm{\quad, if $U_s$ intersects the boundary.}\nonumber
\end{eqnarray}
In the last case assume that $\left.\phi_s\right|_{\partial M\cap U_s}:\partial M\cap U_s\rightarrow V:=B^n(0,1)\subset \mathbb R^n$. We will use the convention that Greek indices correspond to directions tangent to the boundary, counting from $1$ to $n$. Moreover, $\rho_s$ will be a partition of unity subordinate to that open cover. 

Consider any tensor bundle $E$ of rank $k$ over $ M$, with projection map $\pi$, equipped with the connection inherited by $\widehat\nabla$.  The completion of the space of the time dependent $C^\infty(M_T)$ sections of $E$ with respect to the  norm
$$\leftdn u\rightdn_\sob=\leftdn u\rightdn_{L_p(M_T)}+\leftdn \widehat\nabla u\rightdn_{L_p(M_T)}+\leftdn \widehat\nabla^2 u\rightdn_{L_p(M_T)}+\leftdn \partial_t u \rightdn_{L_p(M_T)}$$
will be denoted by $\sobb$. Let also $$\leftn u \rightn_{L^{2,1}_p(M_T)}=\leftdn \partial_t u \rightdn_{L_p(M_T)}+\leftdn \widehat\nabla^2 u\rightdn_{L_p(M_T)}.$$
If $\tau$ is a section of $E$, we will denote by ${}^s\tau^{ijk...}_{\mu\nu...}$ the coordinates of this tensor with respect to the trivialization based at $U_s$.

We define the following norm for time dependent $L_p$ sections of   $E_{\partial M}=\left\{v\in E\left| \pi\circ v\in \partial M\right.\right\}$ and for $\lambda=1-1/p$:
$$\leftdn v\rightdn_\bnd=\leftdn v\rightdn_{L_p(\partial M_T)}+\leftn v\rightn_\Lpcal$$
Here, setting $\hat\rho_s=\rho_s\circ\phi_s^{-1}$, we define
$$\leftn v\rightn_\Lpcal=\sum_s\max_{i_1,...,i_k}\leftn\hat\rho_s{}^sv^{i_1,...,i_l}{}_{i_{l+1},...,i_k}\rightn_\Lpcall$$
where, for every function $f\in L_p(V_T)$ 
\begin{eqnarray}
\leftn f\rightn_\Lpcallab^p&=&\leftn f\rightn_\Lpcallx^p+\leftn f\rightn_\Lpcallt^p\nonumber\\
\leftn f\rightn_\Lpcallx^p&=&\sum_{\mu=1}^n \int_0^{+\infty}h^{-(1+p\alpha)}\leftdn \Delta_{\mu,h}f\rightdn_{L_p(V_{\mu,h,T})}^p dh\nonumber\\
\leftn f\rightn_\Lpcallt^p&=&\int_0^{+\infty}h^{-(1+p\beta)}\leftdn \Delta_{t,h}f\rightdn_{L_p(V_{T-h})}^p dh.\nonumber
\end{eqnarray}
In the above, 
\begin{eqnarray}
\Delta_{\mu,h}f(y,t)&=&f(y+he_\mu,t)-f(y,t)\nonumber\\
\Delta_{t,h}f(y,t)&=&f(y,t+h)-f(y,t)\nonumber\\
V_{\mu,h,T}&=&\left\{(y,t)\in V_T| y+he_\mu\in V\right\}.\nonumber
\end{eqnarray}
Analogous spaces exist also in the elliptic setting, see for instance \cite{Sol}.

For $l > 0$ nonintegral, we will denote by $C^{l,l/2}(M\times [0,\tau],E)$ the Banach space of time dependent sections $u$ of $E$ having continuous up to the boundary derivatives $\partial_t^r\widehat\nabla^q u$ for all $r,q$ satisfying $2r+q<l$, satisfying appropriate H\"{o}lder conditions in the time and space directions. More precisely, the norm is given by

$$|u|_{l,l/2}=\sup_{s}\max_{I} |{}^s u_{I}|_{[l],B(0,1)}+\sup_s\max_I \left<{}^s u_{I}\right>_{l,l/2,B(0,1)},$$
where ${}^s u_I$ are the coordinate functions of $u$ in the coordinate system $U_s$ and 

\begin{eqnarray}
|f|_{k,B(0,1)}&=&\sum_{0\leq 2r+q\leq k} ||\partial^r_t\partial_x^q f||_\infty \nonumber\\
\left<f\right>_{l,l/2,B(0,1)}&=& \sum_{2r+q=[l]}\left< \partial_t^r \partial_x^q f \right>_{l-[l],x}+\sum_{0<l-2r-q<2}\left< \partial_t^r \partial_x^q f \right>_{\frac{l-2r-q}{2},t}.\nonumber
\end{eqnarray}
Here, for $0<\rho<1$
\begin{eqnarray}
\left< f \right>_{\rho,x}&=&\sup_{x\neq y,\;t}\frac{|f(x,t)-f(y,t)|}{|x-y|^\rho}\nonumber\\
\left< f \right>_{\rho,t}&=&\sup_{t\neq t',\;x}\frac{|f(x,t)-f(x,t')|}{|t-t'|^\rho}.\nonumber
\end{eqnarray}
We will also denote by $|u|_k$ and $\left< u \right>_{l,l/2}$ the norms
\begin{eqnarray}
\left< u \right>_{l,l/2}&=&\sup_s\max_I \left<{}^s u_{I}\right>_{l,l/2,B(0,1)}\nonumber\\
|u|_k &=& \sup_{s}\max_{I} |{}^s u_{I}|_{k,B(0,1)}.\nonumber
\end{eqnarray}
For any integer $k\geq 0$ we will denote by $C^{k}(M\times[0,\tau])$ the space of sections with all the derivatives $\partial_t^r\widehat\nabla^{q} u$ for $2r+q\leq k$ continuous, equipped with the norm $|\cdot|_k$.

By the definition of $\bnd$ it is not hard to see that $C^{\epsilon,\frac{\epsilon}{2}}(\partial M_T)$ embeds in $\bnd$, provided that $\epsilon>\lambda$. We will also need the following embedding theorems.
\begin{lemma}\label{emb1}
\mbox{}
\begin{enumerate}
\item For $1<p<\infty$, and $u\in W_p^{2,1}(M_T)$,  $$||\widehat\nabla u||_{W_p^{\lambda,\lambda/2}(\partial M)}\leq C_1 ||u||_{W_p^{2,1}(M_T)}.$$
\item If $\frac{n+3}{2}<p<\infty$ and $0<\alpha<\min(1,2-(\frac{n+3}{p}))$, then $$\left<u\right>_{\alpha,\alpha/2}\leq C_2\left(\delta^{2-(n+3)/p-\alpha}\leftn u \rightn_\Lsob+\delta^{-(n+3)/p-\alpha}\leftdn u\rightdn_{L^p(M_T)}\right).$$
\item If $n+3<p<\infty$ and $0<\alpha\leq1-(n+3)/p$, then
$$\langle\widehat\nabla u\rangle_{\alpha,\alpha/2}\leq C_3\left(\delta^{1-(n+3)/p-\alpha}\leftn u \rightn_\Lsob+\delta^{-(1+(n+3)/p+\alpha)}\leftdn u\rightdn_{L^p(M_T)}\right).$$
\end{enumerate}
In the above, the constants do not depend on $T>0$ and $0<\delta\leq\min(d,T^{1/2})$, where $d$ is a constant depending on the chosen atlas $\{U_s\}$.
\end{lemma}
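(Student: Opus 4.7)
The plan is to prove the three estimates by localizing via the atlas $\{U_s\}$ and partition of unity $\{\rho_s\}$, reducing to statements on the flat half-space model $B(0,1)^+\times[0,T]$ (or the interior model $B(0,1)\times[0,T]$). On each chart the intrinsic derivatives $\widehat\nabla^q\partial_t^r u$ differ from the flat Euclidean derivatives by bounded lower-order terms involving the Christoffel symbols of $h$, so after a standard interpolation one may argue with Euclidean derivatives and then sum in $s$.

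For (1), after straightening the boundary the statement becomes the classical trace theorem of Solonnikov for anisotropic Sobolev spaces: the trace of $W^{2,1}_p$ on $\{x^{n+1}=0\}\times[0,T]$ lies in $W^{2-1/p,\,1-1/(2p)}_p$, and the trace of any first derivative lies in $W^{1-1/p,\,(1-1/p)/2}_p=W^{\lambda,\lambda/2}_p$. This is a consequence of the results of \cite{MR0211083} applied componentwise to $\widehat\nabla u$ in each chart. Summing over the partition of unity and absorbing the commutators coming from $\widehat\nabla$ into lower-order pieces of the $W^{2,1}_p$ norm yields the stated bound, with a constant $C_1$ independent of $T$ because the Euclidean trace inequality is scale invariant under the natural parabolic rescaling $(x,t)\mapsto(\delta x,\delta^2 t)$.

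For (2) and (3), the strategy is parabolic rescaling combined with the unit-scale Sobolev embeddings $W^{2,1}_p(Q_1)\hookrightarrow C^{\alpha,\alpha/2}(Q_1)$ for $p>(n+3)/2$, $0<\alpha<\min(1,2-(n+3)/p)$, and $W^{2,1}_p(Q_1)\hookrightarrow C^{1+\alpha,(1+\alpha)/2}(Q_1)$ for $p>n+3$, $0<\alpha\le 1-(n+3)/p$, where $Q_1$ is a unit parabolic half-cylinder. Given $u$ localized in a chart and $0<\delta\le\min(d,T^{1/2})$, the rescaling $u_\delta(x,t)=u(\delta x,\delta^2 t)$ is defined on a cylinder containing $Q_1$ and enjoys the homogeneities
$$\|u_\delta\|_{L^p}=\delta^{-(n+3)/p}\|u\|_{L^p},\qquad \|u_\delta\|_{L^{2,1}_p}=\delta^{2-(n+3)/p}\|u\|_{L^{2,1}_p},$$
$$\langle u_\delta\rangle_{\alpha,\alpha/2}=\delta^{\alpha}\langle u\rangle_{\alpha,\alpha/2},\qquad \langle\widehat\nabla u_\delta\rangle_{\alpha,\alpha/2}=\delta^{1+\alpha}\langle\widehat\nabla u\rangle_{\alpha,\alpha/2}.$$
Applying the unit-scale embeddings to $u_\delta$ on $Q_1$ and dividing through by the appropriate power of $\delta$ yields exactly the inequalities (2) and (3); the restriction $\delta\le\min(d,T^{1/2})$ is precisely what guarantees that the rescaled chart contains a unit parabolic half-cylinder in both the space and time directions.

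The only nontrivial technical point is verifying that the intrinsic norms on $M_T$ (defined through $\widehat\nabla$) and the Euclidean coordinate norms are equivalent modulo lower-order terms; these lower-order terms are absorbed by standard interpolation inequalities of the form $\|\widehat\nabla u\|_{L^p}\le\varepsilon\|u\|_{L^{2,1}_p}+C(\varepsilon)\|u\|_{L^p}$, which after rescaling contribute only to constants already present on the right-hand sides. The $T$-independence of $C_2$ and $C_3$ then follows from the scale invariance of the unit-scale embedding on $Q_1$ together with the parabolic scaling above.
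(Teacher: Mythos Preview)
Your sketch is correct and follows the standard route. Note, however, that the paper does not actually supply a proof of this lemma: it simply refers the reader to Lemma~3.3 in Chapter~II of \cite{MR0241822} and Lemma~A.1 in \cite{MR1163209}. What you have written is a faithful outline of the arguments found in those references --- localize via the atlas, reduce to the Euclidean model, invoke the anisotropic trace theorem for (1), and obtain (2)--(3) from the unit-scale parabolic Sobolev embedding combined with the parabolic rescaling $(x,t)\mapsto(\delta x,\delta^2 t)$. The scaling exponents you record are correct and produce exactly the stated powers of $\delta$; the restriction $\delta\le\min(d,T^{1/2})$ indeed enters so that the rescaled domain contains a unit parabolic cylinder. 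One small point worth making explicit in a full write-up is the passage from the local H\"older seminorm on a $\delta$-cylinder to the global seminorm on $M_T$: when two points are separated by more than $\delta$ in space or $\delta^2$ in time, the difference quotient is controlled by $\delta^{-\alpha}\|u\|_\infty$, and $\|u\|_\infty$ is in turn bounded by the same right-hand side via the embedding $W^{2,1}_p\hookrightarrow C^0$. This is routine but should be mentioned if you intend the argument to stand on its own rather than as a citation.
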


\begin{proof}
See Lemma 3.3 at Chapter II of \cite{MR0241822} or Lemma A.1 in \cite{MR1163209}.
\end{proof}

From now on let us fix some $p>n+3$ and some $\alpha\leq 1-\frac{n+3}{p}$. Then, as the previous Lemma implies, the Sobolev space $\sob$ embeds in the H\"{o}lder space $C^{1+\alpha,\frac{1+\alpha}{2}}(\overline{M_T})$. Moreover, we get the following estimates (see Corollary A.2  in \cite{MR1163209}).

\begin{lemma}\label{emb2}
For all $u\in W_p^{2,1}(M_T)$, with $u(.,0)\equiv 0$, $n+3<p<\infty$, $0<\gamma=(1-\frac{n+3}{p})/2$ and all sufficiently small $T>0$
\begin{enumerate}
\item $|u|_1\leq C_4 T^\gamma |u|_{L_p^{2,1}(M_T)}.$
\item $|u|_{\mathcal{L}_p^{\beta,\beta/2}(\partial M_T)}\leq C_5 T^\gamma |u|_{L_p^{2,1}(M_T)}$, for all $\beta\in(0,1).$
\end{enumerate}
\end{lemma}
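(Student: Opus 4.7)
The approach uses Lemma \ref{emb1} together with the vanishing of $u$ at $t=0$ in an essential way: the initial condition converts $L^p$-control into $T$-decay, which, when fed into the interpolation-type estimates of Lemma \ref{emb1}, produces the required $T^\gamma$ factor. As a preliminary, since $u(\cdot,0)\equiv 0$, writing $u(x,t)=\int_0^t\partial_s u(x,s)\,ds$ and applying Minkowski's inequality yields $\leftdn u\rightdn_{L_p(M_T)}\le T\,\leftdn\partial_t u\rightdn_{L_p(M_T)}\le T\,\leftn u\rightn_{\Lsob}$. Moreover, since $p>n+3$, the trace $u(\cdot,0)$ lies in $W_p^{2-2/p}(M)\hookrightarrow C^1(M)$, so $u(\cdot, 0)\equiv 0$ forces also $\widehat\nabla u(\cdot, 0)\equiv 0$ pointwise.

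For part (1), apply Lemma \ref{emb1}(2) and (3) with $\delta=T^{1/2}$ and a fixed $\alpha\in(0,1-(n+3)/p)$. The $L^p$ bound above makes the two summands on the right-hand side of each inequality balance, yielding $\left<u\right>_{\alpha,\alpha/2}\le CT^{1/2+\gamma-\alpha/2}\leftn u\rightn_{\Lsob}$ and $\left<\widehat\nabla u\right>_{\alpha,\alpha/2}\le CT^{\gamma-\alpha/2}\leftn u\rightn_{\Lsob}$. Combining with the pointwise vanishing at $t=0$ of both $u$ and $\widehat\nabla u$, we deduce $|u(x,t)|+|\widehat\nabla u(x,t)|\le t^{\alpha/2}\bigl(\left<u\right>_{\alpha,\alpha/2}+\left<\widehat\nabla u\right>_{\alpha,\alpha/2}\bigr)\le CT^\gamma\leftn u\rightn_{\Lsob}$, which proves (1).

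For part (2), the defining integrals of $|\cdot|_{\mathcal L_p^{\beta,\beta/2}(\partial M_T)}$ are estimated directly via $|\Delta_{\mu,h}u|\le h^\alpha\left<u\right>_{\alpha,\alpha/2}$ and $|\Delta_{t,h}u|\le h^{\alpha/2}\left<u\right>_{\alpha,\alpha/2}$, choosing $\alpha\in(\beta,1)$ so that the $h$-integrals converge at zero. Adding the spatial and temporal contributions and using the bound on $\left<u\right>_{\alpha,\alpha/2}$ from part (1), the resulting $p$-th power is bounded by $CT^{p(\gamma+(1-\beta)/2)+1}\leftn u\rightn_{\Lsob}^p$; taking $p$-th roots yields $|u|_{\mathcal L_p^{\beta,\beta/2}(\partial M_T)}\le CT^{\gamma+(1-\beta)/2+1/p}\leftn u\rightn_{\Lsob}\le CT^\gamma\leftn u\rightn_{\Lsob}$ for all $\beta\in(0,1)$ and $T\le 1$.

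The main delicate point is the precise balancing of the two scaling terms in Lemma \ref{emb1}(3); without the $L^p$-smallness of $u$ -- itself a consequence of $u(\cdot, 0)\equiv 0$ -- no positive power of $T$ could be extracted from the derivative estimate. The fact that $\widehat\nabla u(\cdot, 0)\equiv 0$ in a strong sense, which allows upgrading a H\"older seminorm bound to a sup-norm bound on $\widehat\nabla u$, is equally essential and relies on the Sobolev embedding $W_p^{2-2/p}\hookrightarrow C^1$ afforded by $p>n+3$.
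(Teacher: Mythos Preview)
The paper does not give its own proof of this lemma; it simply refers to Corollary~A.2 in Weidemaier~\cite{MR1163209}. Your argument is precisely the natural reconstruction of that result from Lemma~\ref{emb1}: bound $\leftdn u\rightdn_{L_p(M_T)}$ by $T\leftn u\rightn_\Lsob$ via the initial condition, balance the two scaling terms in Lemma~\ref{emb1}(2),(3) by taking $\delta=T^{1/2}$, and then convert the H\"older seminorm bounds into sup-norm bounds using $u(\cdot,0)=\widehat\nabla u(\cdot,0)=0$. This is exactly the mechanism behind Weidemaier's corollary, so your approach coincides with the intended one.

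One minor slip: in part~(2), after inserting $\left<u\right>_{\alpha,\alpha/2}\le CT^{1/2+\gamma-\alpha/2}\leftn u\rightn_\Lsob$, the \emph{spatial} piece of the seminorm yields the exponent $p(\gamma+(1-\alpha)/2)+1$, not $p(\gamma+(1-\beta)/2)+1$; since $\alpha>\beta$ the spatial term actually dominates for small $T$. This does not affect the conclusion because $(1-\alpha)/2+1/p>0$ still gives a strictly positive excess over $p\gamma$, so the final bound $|u|_{\mathcal L_p^{\beta,\beta/2}(\partial M_T)}\le C T^\gamma\leftn u\rightn_\Lsob$ remains valid for $T\le 1$.
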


Also, we will be using the following product estimate.
\begin{lemma}\label{product}
If $f,g\in\Lpcallab\cap L^{\infty}(V_T)$ and $\hat\rho=\rho\circ\phi^{-1}$, then 
$$|\hat\rho f g|_\Lpcallab\leq C_6||fg||_\infty+||f||_\infty|\hat\rho g|_{\mathcal L_p^{\alpha,\beta}(V_T)}+||g||_\infty|\hat\rho f|_{\mathcal L_p^{\alpha,\beta}(V_T)}.$$
\end{lemma}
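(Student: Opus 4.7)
The plan is to estimate the spatial $(\alpha,0)$ and temporal $(0,\beta)$ pieces of the seminorm $|\cdot|_\Lpcallab$ separately, exploiting the fact that the cutoff $\hat\rho$ depends only on the spatial variable. For the temporal part, $\Delta_{t,h}\hat\rho\equiv 0$ yields the clean discrete Leibniz identity
\[
\Delta_{t,h}(\hat\rho fg)(y,t) = \Delta_{t,h}(\hat\rho f)(y,t)\cdot g(y,t+h) + f(y,t)\cdot \Delta_{t,h}(\hat\rho g)(y,t).
\]
Bounding the shifted factors in $L^\infty$ and taking $L_p(V_{T-h})$ norms, then integrating against $h^{-(1+p\beta)}\,dh$, immediately produces
\[
|\hat\rho fg|_\Lpcallt \leq \|g\|_\infty\,|\hat\rho f|_\Lpcallt + \|f\|_\infty\,|\hat\rho g|_\Lpcallt.
\]

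For the spatial part I would rely on the algebraic identity
\[
\Delta_{\mu,h}(\hat\rho fg) = \Delta_{\mu,h}(\hat\rho f)\cdot g(\cdot+he_\mu) + f\cdot \Delta_{\mu,h}(\hat\rho g) - f\cdot g(\cdot+he_\mu)\cdot \Delta_{\mu,h}\hat\rho,
\]
obtained by inserting $\hat\rho\,\Delta_{\mu,h}g = \Delta_{\mu,h}(\hat\rho g) - g(\cdot+he_\mu)\Delta_{\mu,h}\hat\rho$ into the standard Leibniz expansion of $\Delta_{\mu,h}((\hat\rho f)g)$. The first two summands contribute $\|g\|_\infty\,|\hat\rho f|_\Lpcallx$ and $\|f\|_\infty\,|\hat\rho g|_\Lpcallx$ after taking $L_p(V_{\mu,h,T})$ norms and integrating in $h$. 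For the third summand I would split $g(\cdot+he_\mu) = g + \Delta_{\mu,h}g$: the diagonal piece $fg\cdot \Delta_{\mu,h}\hat\rho$ is pointwise dominated by $\|fg\|_\infty\,|\Delta_{\mu,h}\hat\rho|$ and thus contributes $\|fg\|_\infty\,|\hat\rho|_\Lpcallx$, a bounded quantity since $\hat\rho$ is smooth with compact support in $V$; the remaining cross term $f\cdot\Delta_{\mu,h}g\cdot\Delta_{\mu,h}\hat\rho$ carries two factors of the spatial difference operator, and using $|\Delta_{\mu,h}\hat\rho|\leq h\|\nabla\hat\rho\|_\infty$ together with $|\Delta_{\mu,h}g|\leq 2\|g\|_\infty$ one checks that its $h$-integral converges for $\alpha<1$ and is controlled by the same smooth-cutoff data.

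Summing the spatial and temporal contributions via $|\cdot|_\Lpcallab^p = |\cdot|_\Lpcallx^p + |\cdot|_\Lpcallt^p$ and taking $p$-th roots gives the stated estimate, with $C_6$ depending only on $p$, $\alpha$, $\beta$ and the $C^1$ norm of $\hat\rho$. I expect the main bookkeeping obstacle to be precisely this last cross-remainder: a naive bound on the third summand would produce $\|f\|_\infty\|g\|_\infty$ in place of the sharper $\|fg\|_\infty$, and it is the add-and-subtract split $g(\cdot+he_\mu) = g + \Delta_{\mu,h}g$ that isolates the product $fg$ at leading order while pushing the mismatch into a strictly higher-order (in $h$) term absorbed by the smoothness of the fixed cutoff.
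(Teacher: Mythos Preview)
The paper states this lemma without proof, so there is no argument to compare against; your finite-difference Leibniz approach is the natural one and is essentially correct.

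One small point deserves attention. Your treatment of the cross-remainder $f\cdot\Delta_{\mu,h}g\cdot\Delta_{\mu,h}\hat\rho$ uses the bounds $|\Delta_{\mu,h}\hat\rho|\leq h\|\nabla\hat\rho\|_\infty$ and $|\Delta_{\mu,h}g|\leq 2\|g\|_\infty$, which together yield a contribution of size $C\,\|f\|_\infty\|g\|_\infty$, \emph{not} $C\,\|fg\|_\infty$. Your add-and-subtract split $g(\cdot+he_\mu)=g+\Delta_{\mu,h}g$ does isolate the product $fg$ in the leading piece, but the remainder you push off still carries the product of the individual sup-norms, so the argument as written actually proves
\[
|\hat\rho fg|_\Lpcallab \leq C_6\,\|f\|_\infty\|g\|_\infty + \|f\|_\infty\,|\hat\rho g|_\Lpcallab + \|g\|_\infty\,|\hat\rho f|_\Lpcallab,
\]
which is weaker than the form stated (since $\|fg\|_\infty\leq\|f\|_\infty\|g\|_\infty$ but not conversely). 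This is harmless in context: every application of the lemma in the paper (see the computations in the proofs of Lemmata~\ref{selfmap} and~\ref{lipconst}) immediately estimates $\|fg\|_\infty$ by the product $\|f\|_\infty\|g\|_\infty$ anyway, so the distinction never matters. But you should be aware that your argument does not quite recover the sharp constant structure as literally written.
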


\subsection{The mean curvature.}
Let $g$ be a Riemannian metric on $M$ and $N$ the ourward unit normal to $\partial M$ with respect to $g$. The second fundamental form $\mathcal A$ of the boundary is defined by
$$\mathcal A=\frac{1}{2}\left( \mathcal L_N g\right)^T.$$
The mean curvature of the boundary with respect to the metric $g$ is then given by $$2\mathcal H(g)=\tr_{g^T} \mathcal L_N g.$$
In the following we are going to need the following formulae, which can be proven by direct computation.

\begin{lemma}
If $g_t$ is a smooth one-parameter family of metrics, such that $g_0=g$, and $\partial_t g|_{t=0}\equiv h$, the first variation of the mean curvature of the boundary is given by the formula:
\be
2\mathcal H_g'=\tr_{g^T} \nabla_N h +2\delta_{\partial M}\left(h(N)^T\right)-h(N,N)\mathcal H(g).\nonumber
\ee
\end{lemma}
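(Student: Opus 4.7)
The plan is to differentiate the defining formula
$$2\mathcal H(g_t) = \tr_{g_t^T}\mathcal L_{N_t}g_t$$
at $t=0$ by the product rule, accounting for the three $t$-dependent factors: the inverse induced metric $(g_t^T)^{-1}$, the outward unit normal $N_t$, and the Lie derivative $\mathcal L_{N_t}g_t$ itself. I would work at a chosen boundary point in a $t$-independent local frame $\{E_\alpha\}_{\alpha=1}^n$ tangent to $\partial M$ and orthonormal for $g$ at that point, so that $(g^T)^{\alpha\beta}$ reduces to $\delta^{\alpha\beta}$ and tangential index manipulations become trivial.

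The preliminary computation is the first variation $\dot N := \partial_t|_{t=0} N_t$. Differentiating the two defining relations $g_t(N_t,N_t)=1$ and $g_t(N_t,X)=0$ for $X\in T\partial M$ gives a linear system whose unique solution is
$$\dot N = -\tfrac12 h(N,N)\,N - h(N)^T,$$
where $h(N)^T$ is the tangent vector field metric-dual to the tangential part of the $1$-form $h(N,\cdot)$. Using the bilinearity identity $\partial_t|_{t=0}\mathcal L_{N_t}g_t = \mathcal L_{\dot N}g + \mathcal L_N h$, the quantity $2\mathcal H_g'$ splits into three pieces: (i) $-h^{\alpha\beta}(\mathcal L_N g)(E_\alpha,E_\beta)$ from varying the inverse boundary metric, (ii) $\delta^{\alpha\beta}(\mathcal L_N h)(E_\alpha,E_\beta)$, which I expand via the standard identity $(\mathcal L_N h)(X,Y) = (\nabla_N h)(X,Y) + h(\nabla_X N,Y) + h(X,\nabla_Y N)$, and (iii) $\delta^{\alpha\beta}(\mathcal L_{\dot N}g)(E_\alpha,E_\beta) = 2\delta^{\alpha\beta}g(\nabla_{E_\alpha}\dot N, E_\beta)$, into which I substitute the formula for $\dot N$ and apply the Gauss formula $\nabla^g_X W = \nabla^{\partial M}_X W + (\text{normal})$ for the tangent field $W = h(N)^T$.

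The main obstacle is the bookkeeping of cross terms. Two occurrences of $\langle h^T,\mathcal A\rangle$ arise with opposite signs from (i) and from the $h(\nabla N,\cdot)$ contributions in (ii), and their cancellation is exactly what leaves $\tr_{g^T}\nabla_N h$ as the combined contribution of the first two pieces. In (iii), the normal part of $\dot N$ contributes $-h(N,N)\,\tr_{g^T}\mathcal A = -h(N,N)\mathcal H(g)$, while the tangential part contributes $-2\,\mathrm{div}_{\partial M}(h(N)^T) = 2\,\delta_{\partial M}(h(N)^T)$, using the convention that the codifferential on a $1$-form is the negative of the divergence of its metric dual. Summing the three pieces then yields the asserted identity. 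As an equivalent but more computational alternative, one may work in Gaussian (Fermi) normal coordinates adapted to $(\partial M, g)$, in which $g_{00}=1$, $g_{0\alpha}=0$, and $\Gamma^\gamma_{0\alpha}=\mathcal A^\gamma_\alpha$ at the boundary, making each of the cancellations above manifest at the level of components.
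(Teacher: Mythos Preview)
Your proposal is correct and follows essentially the same approach as the paper's proof: both differentiate $2\mathcal H(g_t)=\tr_{g_t^T}\mathcal L_{N_t}g_t$, compute the variation $\dot N=-\tfrac12 h(N,N)N-h(N)^T$, split $(\mathcal L_N g)'=\mathcal L_{\dot N}g+\mathcal L_N h$, and track the cancellation of the $\langle h^T,\mathcal A\rangle$ terms. Your write-up is in fact somewhat more explicit than the paper's about the identity $(\mathcal L_N h)(X,Y)=(\nabla_N h)(X,Y)+h(\nabla_X N,Y)+h(X,\nabla_Y N)$ and the bookkeeping of the cross terms, but the structure is the same.
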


\begin{lemma}\label{local}
In the local coordinates defined in this section the mean curvature of the boundary of $M$ is given by
$$2\mathcal H=g^{T,\alpha\beta}\nu^i\partial_i(g_{\alpha\beta})-\left( \frac{2g^{0l}g^{\alpha k}}{\sqrt{g^{00}}}+\frac{g^{0l}g^{0k}g^{0\alpha}}{(\sqrt{g^{00}})^3}-\frac{g^{T,\alpha\beta}g_{0\beta}g^{0l}g^{0k}}{\sqrt{g^{00}}}\right)\partial_\alpha(g_{kl}).$$
\end{lemma}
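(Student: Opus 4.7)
The plan is to unwind $2\mathcal H(g)=\tr_{g^T}\mathcal L_N g$ by writing the outward unit normal $N$ in the chosen boundary chart and then expanding the Lie derivative carefully.

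First I would produce an explicit formula for $N$. In a boundary chart where $\partial M=\{x^0=0\}$, the tangent directions to $\partial M$ are $\partial_\alpha$, $\alpha=1,\dots,n$, so writing $N=N^i\partial_i$ the conditions $g(N,\partial_\alpha)=0$ and $g(N,N)=1$ force $N^i=\pm g^{i0}/\sqrt{g^{00}}$, the sign being chosen to make $N$ outward-pointing. This fixes the meaning of $\nu^i$ in the statement. Next, I would use the coordinate formula
\[
(\mathcal L_N g)_{\alpha\beta}=N^i\partial_i g_{\alpha\beta}+g_{i\beta}\partial_\alpha N^i+g_{i\alpha}\partial_\beta N^i
\]
and contract with $g^{T,\alpha\beta}$ to get
\[
2\mathcal H(g)=g^{T,\alpha\beta}\,\nu^i\,\partial_i g_{\alpha\beta}+2\,g^{T,\alpha\beta}g_{i\beta}\,\partial_\alpha N^i,
\]
using the symmetry of $g^{T,\alpha\beta}$ in the last two terms. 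The first summand is already the first term in the statement.

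The remaining task is to rewrite the second summand in terms of $\partial_\alpha g_{kl}$ only. For this I would differentiate $N^i=g^{i0}/\sqrt{g^{00}}$ tangentially:
\[
\partial_\alpha N^i=\frac{\partial_\alpha g^{i0}}{\sqrt{g^{00}}}-\frac{g^{i0}\,\partial_\alpha g^{00}}{2(g^{00})^{3/2}},
\]
and then use the identity $\partial_\alpha g^{ij}=-g^{ip}g^{jq}\partial_\alpha g_{pq}$ to replace $\partial_\alpha g^{i0}$ and $\partial_\alpha g^{00}$ by tangential derivatives of the components $g_{kl}$. Plugging into $2g^{T,\alpha\beta}g_{i\beta}\partial_\alpha N^i$ produces sums of the form $g^{T,\alpha\beta}g_{i\beta}g^{ik}$ and $g^{T,\alpha\beta}g_{i\beta}g^{i0}$.

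To reach the exact form in the statement, I would split the $i$-sum into $i=0$ and $i=\gamma\in\{1,\dots,n\}$: the tangential piece collapses via $g^{T,\alpha\beta}g_{\gamma\beta}=\delta^\alpha_\gamma$, while the $i=0$ piece leaves an explicit $g^{T,\alpha\beta}g_{0\beta}$ factor. Combining this with the Schur-complement identity $g^{\alpha\beta}=g^{T,\alpha\beta}+g^{0\alpha}g^{0\beta}/g^{00}$ (equivalently $g^{0\alpha}=-g^{00}g^{T,\alpha\beta}g_{0\beta}$), one converts everything into the full inverse metric components $g^{ij}$ and the tangential inverse $g^{T,\alpha\beta}$, after which regrouping by powers of $\sqrt{g^{00}}$ yields the three terms displayed in the statement. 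The main obstacle is purely one of bookkeeping: keeping track of which indices run over $\{0,\dots,n\}$ versus $\{1,\dots,n\}$, and applying the block-inverse identity consistently so that the coefficients of $\partial_\alpha g_{kl}$ line up in precisely the stated form.
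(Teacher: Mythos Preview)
Your proposal is correct and follows essentially the same route as the paper: expand $\tr_{g^T}\mathcal L_N g$ in coordinates, split the index $i$ in $g^{T,\alpha\beta}g_{i\beta}\partial_\alpha\nu^i$ into tangential and normal parts using $g^{T,\alpha\beta}g_{\gamma\beta}=\delta^\alpha_\gamma$, and then substitute the explicit formula for $\partial_\alpha\nu^i$ obtained from $\nu^i=-g^{0i}/\sqrt{g^{00}}$ together with $\partial_\alpha g^{ij}=-g^{ik}g^{jl}\partial_\alpha g_{kl}$. One small remark: the Schur-complement identity you mention is not actually needed---once you have $2\partial_\alpha\nu^\alpha+2g^{T,\alpha\beta}g_{0\beta}\partial_\alpha\nu^0$ and plug in $\partial_\alpha\nu^i=\bigl(\tfrac{g^{0l}g^{ik}}{\sqrt{g^{00}}}-\tfrac{g^{0l}g^{0k}g^{0i}}{2(g^{00})^{3/2}}\bigr)\partial_\alpha g_{kl}$ directly (noting that for $i=0$ the two terms combine to $\tfrac{g^{0l}g^{0k}}{2\sqrt{g^{00}}}$), the three displayed coefficients fall out immediately without any further block-inverse manipulation.
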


\section{A linear parabolic initial-boundary value problem.}
Let $g$ be a $C^{1+\epsilon}$ Riemannian metric on $M$, for some $\epsilon>1-\frac{1}{p}$, $\gamma$ the induced metric on the boundary, $\beta_g=\dive_g -\frac{1}{2}d\tr_g$ the Bianchi operator and $\mathcal H'_g$ be the linearization of the mean curvature at $g$.

We will also denote by $W^{2,1}_{p,0}$ and $W^{\lambda,\lambda/2}_{p,0}$ the subspaces whose elements satisfy the initial condition $u|_{t=0}=0$.
\begin{theorem}\label{linear}
Consider the following linear parabolic initial-boundary value problem on symmetric 2-tensors on $M$
$$\partial_t u-\tr_g \widehat\nabla^2 u= F(x,t),$$
\be\left.\begin{array}{rcl}
\beta_g(u)&=& G(x,t)\\
\mathcal H'_g(u)&=&D(x,t)\\
u^T-\frac{\tr_\gamma u^T}{n}\gamma &=&0\\
\end{array}\right\}\textrm{on $\partial M$,}\label{L}
\ee
$$u|_{t=0}=u_0,$$
for $F\in L^p(M_T)$, $G, D$ in the corresponding $W^{\lambda,\lambda/2}_{p}(\partial M_T)$ space and $u_0\in W^{2,p}(M^o)$. Assuming that the zeroth order compatibility conditions
\begin{eqnarray}
 \beta_g(u_0)&=&G(x,0),\nonumber\\
\mathcal H'_g(u_0)&=&D(x,0),\nonumber\\
u_0^T-\frac{\tr_\gamma u_0^T}{n}\gamma &=&0\nonumber
\end{eqnarray}
hold, problem (\ref{L}) has a unique solution $u\in W^{2,1}_{p}(M_T)$ which satisfies the estimate
\begin{eqnarray}
\leftdn u \rightdn_\sob\leq C_8\left( \leftdn F\rightdn_{L^p(M_T)}+\leftdn G \rightdn_\bnd+\leftdn D \rightdn_\bnd+\leftdn u_0 \rightdn_{W^{2,p}(M^o)}\right).
\end{eqnarray}
Moreover the constant $C_8$ stays bounded as $T\rightarrow 0$ and depends on the $C^{1+\epsilon}$ norms of $g$ and $g^{-1}$.
\end{theorem}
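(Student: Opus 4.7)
The strategy is to recognize (\ref{L}) as a linear parabolic initial-boundary value problem to which Solonnikov's theory \cite{MR0211083} for parabolic systems under general boundary conditions applies directly, yielding both existence/uniqueness in $\sob$ and the asserted $L^p$ a priori estimate. What must be checked is (a) parabolicity of the system in the Petrovsky sense, (b) the Lopatinskii--Shapiro (complementing) condition at each boundary point, and (c) a reduction to zero initial data. Step (c) is standard: extending $u_0$ to a time-independent $\bar u\in\sob$ via a bounded extension from $W^{2,p}(M^o)$, and replacing $u$ by $u-\bar u$, the compatibility hypotheses force the modified $F$, $G$, $D$ to vanish at $t=0$ in the appropriate trace sense, while their norms are controlled by the target right-hand side. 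Step (a) is immediate: on each component of a symmetric 2-tensor, the principal part of the equation has scalar symbol $\tau+|\xi|_g^2$, so the system is diagonally parabolic of weight $2$.

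Step (b) is the heart of the argument. At a boundary point I pick boundary-flattening coordinates so that $\partial M=\{x^0=0\}$, the interior is $\{x^0>0\}$, $g$ is Euclidean at that point, and the outward normal is $-\partial_0$. Freezing coefficients, applying the tangential Fourier transform in $\xi'$ and the time Laplace transform in $\tau$ (with $\Re\tau\geq 0$ and $|\xi'|^2+|\tau|>0$), the ODE on $[0,\infty)$ becomes $\partial_0^2 v=(\tau+|\xi'|^2)v$, whose bounded solutions are $v(x^0)=V e^{-\lambda x^0}$ with $\lambda=\sqrt{\tau+|\xi'|^2}$, $\Re\lambda>0$, and $V$ a constant symmetric matrix in $\mathbb C^{(n+1)(n+2)/2}$. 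I must show that the homogeneous boundary conditions force $V=0$. The zero-order tangential trace-free condition reduces the tangent block to $V_{\alpha\beta}=\frac{s}{n}\delta_{\alpha\beta}$ with $s=\tr_\gamma V^T$. Substituting this into the principal symbols of $\beta_g$ and $\mathcal H'_g$ (the latter obtained from the variation formula in Section 2), and performing the replacements $\partial_0\mapsto -\lambda$, $\partial_\alpha\mapsto i\xi_\alpha$, a short elimination that uses $i\xi_\alpha V_{0\alpha}=-\tfrac{\lambda s}{2}$ (from $\mathcal H'_g=0$) yields $V_{00}=0$, $V_{0\alpha}=\tfrac{i\xi_\alpha(2-n)s}{2n\lambda}$, and finally a scalar compatibility equation $s\bigl(n\tau+2(n-1)|\xi'|^2\bigr)=0$. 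For $n\geq 2$ both summands in the bracket have non-negative real part, so their sum vanishes only at the excluded point $(\xi',\tau)=(0,0)$. Hence $s=0$ and therefore $V=0$, which is the complementing condition.

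Given (a), (b), (c), Solonnikov's theorem produces a unique $u\in\sob$ satisfying the estimate. The dependence of $C_8$ on the $C^{1+\epsilon}$ norms of $g$ and $g^{-1}$ comes from the standard freezing-of-coefficients argument over a finite atlas: the constant-coefficient Solonnikov estimate is applied in each small chart, and the variable-coefficient error is absorbed by chart-diameter smallness together with the $C^{1+\epsilon}$ bound. The choice $\epsilon>1-1/p$ is precisely what ensures that the coefficients of the boundary operators have traces controllable in $\bnd$, via the embedding $C^\epsilon\hookrightarrow W^{\lambda,\lambda/2}_p$ at the boundary. The uniformity of $C_8$ as $T\to 0$ is a built-in feature of the Solonnikov $W^{2,1}_p$ estimates.

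The main obstacle is the explicit verification of the complementing condition. The three families of boundary operators have different orders (zero, one, one) and mix the components of $u$ non-trivially; one must pay careful attention to sign conventions (particularly the sign of $\delta_{\partial M}$ in the principal symbol of $\mathcal H'_g$) so that the coefficient of $\tau$ in the final scalar compatibility equation comes out with the correct sign---if it did not, the condition would fail on the real positive half-line $\Re\tau>0$ and the problem would not be well-posed.
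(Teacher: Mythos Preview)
Your proposal is correct and follows essentially the same approach as the paper: both reduce the problem to Solonnikov's $L^p$ theory for parabolic systems and verify the complementing condition by freezing coefficients at a boundary point, using the trace-free condition to set $V_{\alpha\beta}=\tfrac{s}{n}\delta_{\alpha\beta}$, combining the $0$-component of the Bianchi symbol with the mean-curvature symbol to get $V_{00}=0$, and then arriving at the same scalar relation $s\bigl(n\tau+2(n-1)|\xi'|^2\bigr)=0$. The only cosmetic difference is that the paper carries out the elimination with the condition $\mathrm{Re}\,p\geq -\delta_1|\zeta|^2$ for some $0<\delta_1<1$ (rather than $\mathrm{Re}\,\tau\geq 0$), which is why it invokes the inequality $\tfrac{2(n-1)}{n}>1$; your argument is the $\delta_1=0$ version of the same computation.
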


\begin{proof}
The method followed in Chapter IV of \cite{MR0241822} and Theorem 5.4 of \cite{MR0211083} carries over to the manifold setting, after the necessary adaptation to the realm of manifolds and vector bundles (see \cite{Pulemotov}). We only need to show that the following boundary value problem on $\mathbb R^{n+1}_+=\{x^0\geq 0\}\subset \mathbb R^{n+1}$ satisfies the complementing condition (see \cite{MR0241822},\cite{MR0211083} and \cite{MR0252806}).  
\begin{eqnarray}
\partial_t u_{kl} - \Delta_{eucl}u_{kl}=\widehat F_{kl} \qquad\qquad\qquad\textrm{ on } \mathbb R^{n+1}_+, \nonumber\\
\nonumber\\
\begin{array}{ccc}
\delta^{ij}\partial_i(u_{jk})-\frac{1}{2}\delta^{ij}\partial_{k}u_{ij}&=&\widehat G_k\nonumber\\
\delta^{\alpha\beta}\partial_o u_{\alpha\beta}-2\delta^{\alpha\beta}\partial_{\alpha}u_{\beta 0}&=&\widehat D\label{P}\\
u_{\alpha\beta}-\frac{\delta^{\epsilon\zeta}u_{\epsilon\zeta}}{n}\delta_{\alpha\beta}&=&0\nonumber
\end{array}
\qquad\qquad\textrm{ on } \{x^0=0\},\nonumber
\end{eqnarray}
and
\begin{eqnarray}
u|_{t=0}=0.\nonumber
\end{eqnarray}
Here, $\widehat F_{kl}\in L_p(\mathbb R^{n+1}_+)$ and $\widehat G_k, \widehat D\in W^{\lambda,\lambda/2}_{p,0}(\partial \mathbb R^{n+1}_+)$. One obtains (\ref{P}) by expressing (\ref{L}) in local coordinates around a point $x$ of the boundary, with $g_{ij}(x)=\delta_{ij}$, freezing the coefficients at $(x,0)$ and keeping the higher order terms. 
The principal symbols of the boundary operators are:
\begin{eqnarray}
i\sum_l \xi_l h_{lk}-\frac{i}{2}\sum_l \xi_k h_{ll}           \label{symbol1}\\
i\xi_0\sum_\alpha h_{\alpha\alpha}-2i\sum_\alpha\xi_{\alpha}h_{0\alpha} \label{symbol2}
\end{eqnarray}
and the principal symbol of the parabolic operator $\partial_t-\Delta_{eucl}$, is $(p+|\zeta|^2+\tau^2)h_{ij}$, where $\zeta=(\zeta_1,\ldots,\zeta_n)\in\mathbb R^n$ and $|\zeta|$ its Euclidean norm. We obtain the following positive root $\hat\tau=i\sqrt{p+|\zeta|^2}$.
Setting equations $(\ref{symbol1}),(\ref{symbol2})$ to zero and letting $\xi_0=\hat\tau$, $\xi_\alpha=\zeta_\alpha$, we get the following system:
\begin{eqnarray} 
i\hat\tau h_{00}+i\sum_\alpha \xi_{\alpha}h_{\alpha 0}-\frac{i}{2}\hat\tau\sum_l h_{ll}&=&0 \label{a1}\\
i\hat\tau h_{0\mu}+i\sum_\alpha\xi_\alpha h_{\alpha\mu}-\frac{i}{2}\xi_\mu\sum_l h_{ll}&=&0 \label{a2}\\
i\hat\tau\sum_\alpha h_{\alpha\alpha}-2i\sum_\alpha \xi_\alpha h_{0\alpha}&=&0 \label{a3}\\
h_{\alpha\beta}&=&\phi \delta_{\alpha\beta}.\label{a3b}
\end{eqnarray}
Since the principal symbol of the equation is in diagonal form, the complementing condition is equivalent to proving that system (\ref{a1})-(\ref{a3b}) has only the zero solution when $(p,\zeta)$ satisfy
\be
\textrm{Re}p\geq-\delta_1|\zeta|^2\label{assumption}
\ee
for some $0<\delta_1<1$.

From equation $(\ref{a1})$ we have \be 2i\sum_\alpha \zeta_\alpha h_{\alpha 0}=i\hat\tau\sum_l h_{ll}-2i\hat\tau h_{00}=i\hat\tau(\tr h-2h_{00})\label{a4},\ee
while multiplying equation  $(\ref{a2})$ by $2\zeta_\mu$ and then adding over $\mu$ we find:
\be
\sum_\mu 2i\hat\tau\zeta_\mu h_{0\mu}+2i\sum_{\alpha,\mu}\zeta_\alpha\zeta_\mu h_{\alpha\mu}-i\sum_\mu \zeta_\mu^2\tr h = 0. \label{a5}
\ee
This gives, taking (\ref{a4}) and $h_{\alpha\mu}=\phi\delta_{\alpha\mu}$ into account:
\be
i\hat\tau^2(\tr h-2h_{00})+2i|\zeta|^2\phi-i|\zeta|^2\tr h=0 \label{a6}
\ee
which, after substituting for $\hat\tau$, leads to the equation:
\be
p h_{00}=p n \phi+2(n-1)|\zeta|^2\phi\label{maestros}
\ee
Now, by equation $(\ref{a3})$ we have:
\be 2i\sum_\alpha\zeta_\alpha h_{0\alpha}=i\hat\tau\sum_\alpha h_{\alpha\alpha}=i\hat\tau\phi n \ee
which combined with $(\ref{a1})$ gives:
\be
2i\hat\tau h_{00}+i\hat\tau\phi n -i\hat\tau\tr h=0\\
\ee
and therefore $i\hat\tau h_{00}=0$. Now, (\ref{assumption}) implies that $p\neq -|\zeta|^2$, which gives $\hat\tau\neq 0$ and thus $h_{00}=0$.

Now, by (\ref{maestros}) we have that 
\be
\phi\left(pn+2|\zeta|^2(n-1)\right)=0.\label{maestro2}
\ee
However, assumption (\ref{assumption}) implies that $\left(pn+2|\zeta|^2(n-1)\right)\neq 0$,  since $\frac{2(n-1)}{n}\geq 1$  for $n\geq 2$.  This gives that $\phi=0$.

Now we have established that $\phi=h_{00}=0$ it is easy to see that $h_{0\mu}=0$, by (\ref{a3}). This proves the complementing condition for system (\ref{P}).
\end{proof}
\begin{remark}\label{evolve}
Theorem \ref{linear} is still valid if we consider $\gamma_t$ and $g_t$ evolving such that $\gamma_0=g^T$. Note that the complementing condition is satisfied if $\gamma_t$ and $g_{t}^T$ are in the same conformal class. If not, the openness of this condition implies that it holds at least for some short time $\hat \tau>0$ depending on  $C^{\epsilon,\epsilon/2}$ bounds of $\gamma_t$ and $g_t$.  Thus, we either get local (in time) existence or a global solution and the constant $C_8$ depends on the $C^{1+\epsilon,\frac{1+\epsilon}{2}}$ norms of $g_t$ and $\gamma_t$ and the $C^0$ norms of $g^{-1}$ and $\gamma^{-1}$.
\end{remark}
\section{A boundary value problem for the Ricci-DeTurck flow.}\label{dtrf}

Let $g^0$ be a $C^2$ Riemannian metric on $M^{n+1}$. Consider also  $\gamma(x,t)\in C^{1+\epsilon,\frac{1+\epsilon}{2}}(\partial M_T)$, a family of boundary metrics and a function $\eta(x,t)\in C^{\epsilon,\frac{\epsilon}{2}}(\partial M_T)$, where $\epsilon$ is always $1-\frac{1}{p}<\epsilon<1$ and $p>n+3$.  Moreover, assume the zeroth order compatibility conditions (\ref{zeroorder}) hold. 

We supplement the Ricci flow equation
\be
\partial_t g=-2\Ric(g),\label{ricciflow}
\ee
with the boundary conditions
\begin{eqnarray}
\left[g^T\right]&=&\left[\gamma_t\right],\label{bdata}\\
\mathcal H(g)&=&\eta(x,t),\nonumber
\end{eqnarray}
and the initial condition 
\be 
g(0)= g^0, \label{idata}
\ee
and aim to study the existence and regularity of solutions.

 As is well known, the Ricci flow equation is not strongly parabolic, so we will first study the Ricci-DeTurck equation
\be
\partial_t g=-2\Ric(g)+\mathcal L_{\mathcal W(g,\tilde g)} g,\label{riccideteqn}\\
\ee
with the boundary conditions
\begin{eqnarray}
\mathcal W(g,\tilde g)&=&0, \nonumber\\
\left[g^T\right]&=&\left[\gamma_t\right],\label{bndry}\\
\mathcal H(g)&=&\eta(x,t).\nonumber
\end{eqnarray}
Here,  $\mathcal W(g,\tilde g)_l=g_{lr}g^{pq}\left(\Gamma_{pq}^r(g)-\tilde\Gamma_{t,pq}^r \right)$, $\Gamma(g)$ being the Christoffel symbols of $g$, and $\tilde \Gamma_t$ the Christoffel symbols of a $C^2$ family of metrics $\tilde g_t$ with $\tilde g|_{t=0}= g^0$  (i.e $\tilde g\in C^2(M\times [0,T])$).

\begin{remark}
The geometric nature of Ricci flow requires the boundary data to be geometric, namely invariant under diffeomorphisms that fix the boundary. The data (\ref{bdata}) have this property. However, passing to the DeTurck equation we need to impose the additional, gauge-dependent boundary condition $\mathcal W(g,\tilde g)=0$.
\end{remark}
\begin{remark}
We allow the background metric $\tilde g_t$ to vary and define a time dependent reference gauge. This, as will be discussed in Section \ref{comp}, allows higher regularity of the solution on $\partial M\times 0$.
\end{remark}
\subsection{Short-time existence of the Ricci-DeTurck flow.}\label{RDT_proof}
We can now state and prove the main short time existence Theorem. First, define
\begin{eqnarray}
 \kappa=\max\left\{\leftdn g^0 \rightdn_{W^{2,p}(M^o)},  |g^0|_{1+\epsilon},|(g^0)^{-1}|_0, |\gamma|_{1+\epsilon,\frac{1+\epsilon}{2}},\leftn \eta -\eta_0\rightn_{\epsilon,\frac{\epsilon}{2}}\right\}.\nonumber
\end{eqnarray}
Then, the following theorem holds.
\begin{theorem}\label{rdt}
Consider the boundary value problem $(\ref{riccideteqn})$,$(\ref{bndry})$ with initial condition $g(0)=g^0$. For the data $(g^0,\tilde g,\eta,\gamma)$ define
$$\Lambda= \max\left\{\kappa,\sup_t \left\{ \leftdn \tilde g -g^0\rightdn_{W^{2,p}(M^o)}+\leftdn \partial_t \tilde g(t)\rightdn_{L_p(M^o)}\right\}\right\}.$$
For any $K>0$ there exists a $T=T(\Lambda,K)>0$ and a solution  $g(t)\in\sob$ of this initial-boundary value problem which satisfies  $\leftdn g-g^0 \rightdn_\sob\leq K$.
\end{theorem}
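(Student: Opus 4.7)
The plan is to prove Theorem \ref{rdt} by a contraction mapping argument in the Sobolev space $\sob$, in the spirit of Weidemaier, using the linear theory of Theorem \ref{linear} together with Remark \ref{evolve}.

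First, I extend the initial datum to a reference $\bar g \in \sob$ with $\bar g(\cdot,0) = g^0$ and $\|\bar g - g^0\|_\sob$ controlled by $\Lambda$, and look for the solution in the form $g = \bar g + w$ with $w \in W^{2,1}_{p,0}(M_T)$, so that $w|_{t=0} = 0$ automatically. The compatibility (\ref{zeroorder}) ensures that the prescribed boundary conditions are satisfied by $g^0$ at $t=0$, which is what will make the nonlinear boundary forcings of the iteration trace-compatible with the vanishing initial data.

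For $v$ in the ball $B_K = \{v \in W^{2,1}_{p,0}(M_T) : \|v\|_\sob \leq K\}$, I define $\Phi(v) = w$ by letting $h := \bar g + v$ play the role of the coefficient metric in the linear system of Theorem \ref{linear}, and prescribing right-hand sides equal to the differences between the fully nonlinear operators in (\ref{riccideteqn})--(\ref{bndry}) and their principal linearizations evaluated at $h$. Schematically,
\begin{align*}
F(v) &= -2\Ric(h) + \mathcal{L}_{\mathcal{W}(h,\tilde g)} h - \partial_t \bar g + \tr_h \widehat\nabla^2 \bar g, \\
G(v) &= \beta_h(\bar g) - \mathcal{W}(h, \tilde g)|_{\partial M}, \\
D(v) &= \eta - \mathcal{H}(h) + \mathcal{H}'_h(\bar g),
\end{align*}
with the traceless tangential component handled analogously. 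Remark \ref{evolve} furnishes a unique solution $w \in \sob$ so long as $h$ is close enough to $g^0$ in $C^{1+\epsilon}$, which is ensured by the embedding $\sob \hookrightarrow C^{1+\alpha, (1+\alpha)/2}$ from Lemma \ref{emb1} combined with the smallness of $T$.

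The contraction and self-mapping estimates follow by combining the linear estimate of Theorem \ref{linear} with Lemmas \ref{emb1}, \ref{emb2}, and \ref{product}. Each of $F(v_1)-F(v_2)$, $G(v_1)-G(v_2)$, $D(v_1)-D(v_2)$ is a polynomial expression in $h_i, \widehat\nabla h_i$ (and in $\tilde g, \widehat\nabla \tilde g$) that vanishes when $v_1 = v_2$; the factors involving $v_1 - v_2$ are estimated by the small factor $T^\gamma$ with $\gamma = (1 - (n+3)/p)/2$ from Lemma \ref{emb2}, after peeling off $L^\infty$-type bounds on the remaining factors through Lemmas \ref{emb1} and \ref{product}. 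Consequently, for $T = T(\Lambda, K)$ small, $\Phi$ maps $B_K$ into itself and contracts in the $\sob$-norm, and the Banach fixed-point theorem produces a unique $w \in B_K$; the metric $g = \bar g + w$ is the sought solution.

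The main obstacle is verifying that the right-hand sides $F(v), G(v), D(v)$ are compatible with $w(\cdot,0) \equiv 0$, which is required to place $w$ in $W^{2,1}_{p,0}(M_T)$ and invoke Theorem \ref{linear}. This is exactly the role of (\ref{zeroorder}): at $t = 0$ one has $h(\cdot, 0) = g^0$, so $\mathcal{W}(h, \tilde g)|_{\partial M}$, $\mathcal{H}(h) - \eta$, and the traceless tangential residual all reduce to quantities that are killed by the boundary conditions satisfied by $g^0$, giving $G(v)|_{t=0}=0$ and $D(v)|_{t=0}=0$. A second delicate point is bounding the boundary forcings in the anisotropic Sobolev space $\bnd$ with a surplus power of $T$; this is precisely what the product estimate Lemma \ref{product} and the trace estimate Lemma \ref{emb1}(1) deliver.
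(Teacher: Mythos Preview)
Your overall strategy---a contraction in $\sob$ driven by the linear estimate of Theorem~\ref{linear}---is the paper's strategy, but your implementation differs in one way that creates a genuine gap. You let $h=\bar g+v$ with $v\in W^{2,1}_{p,0}(M_T)$ play the role of the coefficient metric in the linear problem. Theorem~\ref{linear} (and Remark~\ref{evolve}) requires the coefficient metric to lie in $C^{1+\epsilon}$ with $\epsilon>\lambda=1-\tfrac{1}{p}$, because the coefficients of the first-order boundary operators $\beta_h$ and $\mathcal H'_h$ involve $\widehat\nabla h$ and must act as multipliers in $\bnd$. By Lemma~\ref{emb1}(3), a general element of $\sob$ is only $C^{1+\alpha,\frac{1+\alpha}{2}}$ with $\alpha\le 1-\tfrac{n+3}{p}<\lambda$, so $\widehat\nabla h$ is merely $C^{\alpha}$ with $\alpha<\lambda$, and Solonnikov's $L_p$ estimates are not available as stated. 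The paper avoids this by \emph{freezing} all coefficients at the fixed initial metric $g^0\in C^{1+\epsilon}$: the map $S$ solves $\partial_t u-\tr_{g^0}\widehat\nabla^2 u=F_w$ with boundary operators $\beta_{g^0}$ and $\mathcal H'_{g^0}$, and the nonlinearity is pushed entirely into the right-hand sides $F_w,D_w,G_w$. The heart of the argument (Lemmas~\ref{selfmap} and~\ref{lipconst}) is then to show these forcings carry a factor $\zeta(T)\to 0$, which is done by comparing $\mathcal H'_{g^0}(h)$ with $\mathcal H'_{g_s}(h)$ and $\beta_{g^0}(h)$ with $\beta_{g_s}(h)$ term by term via the explicit local expressions, not by a generic ``polynomial in $h,\widehat\nabla h$'' argument.

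A secondary issue is that your schematic formulas for $F,G,D$ do not recover the nonlinear problem at a fixed point. For instance, $G(v)=\beta_h(\bar g)-\mathcal W(h,\tilde g)$ together with $\beta_h(w)=G(v)$ gives at $w=v$ the relation $\mathcal W(h,\tilde g)=\beta_h(\bar g-v)=2\beta_h(\bar g)$ rather than $\mathcal W=0$; and your $F(v)$ still contains the full second-order part of $-2\Ric(h)+\mathcal L_{\mathcal W}h$ even though the linear operator has already absorbed $\tr_h\widehat\nabla^2 w$. Compare the paper's choices $D_w=\beta_{g^0}(w)-\mathcal W(w,\tilde g)$ and $G_w=\mathcal H'_{g^0}(w)-\mathcal H(w)+\eta$, which at $u=w$ give exactly $\mathcal W=0$ and $\mathcal H=\eta$.
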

\begin{proof}
Using the background connection $\widehat\nabla$ the Ricci-DeTurck equation (\ref{riccideteqn}) can be expressed as
\begin{eqnarray}\partial_t g-\tr_g \widehat\nabla^2_{.,.}g=\mathcal R(g(x,t),\widehat\nabla g(x,t))-\mathcal L_{V(g)} g,\end{eqnarray}
where $V(g)=g_{ir}g^{pq}(\widetilde\Gamma^r_{t,pq}-\widehat\Gamma^r_{pq})$, while in local coordinates we get (see \cite{MR2274812})
\begin{eqnarray}
\mathcal R(g,\widehat\nabla g)_{ij}&=&g^{pq}h^{kl}\left(g_{ik}\widehat R_{jplq}+g_{jk}\widehat R_{iplq}\right)\nonumber\\
&&-g^{pq}g^{kl}\left(\frac{1}{2}\widehat\nabla_i g_{kp}\widehat\nabla_j g_{lq} +\widehat\nabla_p g_{jk}\widehat\nabla_l g_{iq}-\widehat\nabla_p g_{jq}\widehat\nabla_q g_{il}\right)\nonumber\\
&&+g^{pq}g^{kl}\left(\widehat\nabla_j g_{kp}\widehat\nabla_q g_{il}+\widehat\nabla_i g_{kp}\widehat\nabla_q g_{jl}\right).\nonumber
\end{eqnarray}
Here $h$ and $\widehat\nabla$ are the background metric and connection we used to define the function spaces.

Moreover, we will express the boundary condition for the conformal class in the form:
$$g_t^T-\frac{\tr_{\gamma_t}g_t^T}{n}\gamma_t=0.$$
Following \cite{MR1163209}, for $K, T>0$  we define the following subset of $\sob$:
$$M_K^T(g^0)=\left\{u\in\sob\left|u|_{t=0}=g^0,\;\leftdn u-g^0\rightdn_\sob\leq K \right.\right\}.$$
Choose $\delta>0$ such that $(g^0)^{ij}\xi_i\xi_j\geq \delta |\xi|^2_{eucl}$ in every coordinate system of the fixed atlas. Note that $\delta$ is controlled from below in terms of $\kappa$. Lemma $\ref{emb2}$ implies that for every $K>0$, there exists $0<T_o(K,g^0)\leq 1$ such that $\dete(u_{ij})\geq \delta/2$ and $(u^{-1})^{ii}\geq\delta/2$ for every $u\in M_K^{T_o}(g^0)$. In particular, $u(x,t)$ is a metric for all $t\in[0,T_o]$.

Now, let $T\leq T_o$. For every $w\in M_K^T(g^0)$ the following linear parabolic boundary value problem is well defined:

\begin{eqnarray}
\partial_t u-\tr_{g^0}\widehat\nabla^2 u&=&\mathcal R(w(x,t),\widehat\nabla w(x,t))\\&&-\mathcal L_{V(w)} w-\tr_{g^0}\widehat\nabla^2 w+\tr_w \widehat\nabla^2 w\equiv F_w,\nonumber\\
\beta_{g^0}(u)&=&\beta_{g^0}(w)-\mathcal W(w)\equiv D_w,\nonumber\\
\mathcal H_{g^0}'(u)&=&\mathcal H_{g^0}'(w)-\mathcal H(w)+\eta (x,t) \equiv G_w,\nonumber\\
u^T-\frac{\tr_{\gamma_t}u^T}{n}\gamma_t&=&0.\nonumber\\
\nonumber\\
u|_{t=0}&=&g^0.\nonumber
\end{eqnarray}
and has a unique solution $u\in\sob$, by Theorem \ref{linear}. This defines a map
$$S:M_K^T(g^0)\rightarrow \sob,$$
where $S(w)$ is this solution.

Notice that a fixed point of $S$ solves the nonlinear boundary value problem. Therefore, it suffices to prove that $S$ is a map from $M_K^T(g^0)$ to itself and also a contraction, as long as $T$ is small enough. The existence of the fixed point will follow, since $M_K^T(g^0)$ is a complete metric space.

It is easy to see that $\sigma=S(w)-g^0$ satisfies
\begin{eqnarray}
\partial_t \sigma-\tr_{g^0}\widehat\nabla^2\sigma&=&F_w+\tr_{g^0}\widehat\nabla^2  g^0\equiv \widehat F_w,\nonumber\\
\nonumber\\
\beta_{g^0}(\sigma)&=&D_w,\nonumber\\
\mathcal H_{g^0}'(\sigma)&=&\mathcal H_{g^0}'(w-g^0)-(\mathcal H(w)-\mathcal H(g^0))\\&&\qquad\qquad+\eta (x,t)-\eta(x,0) \equiv \widehat G_w,\nonumber\\
\sigma^T-\frac{\tr_{\gamma_t} \sigma^T}{n}\gamma_t &=&0,\nonumber\\
\nonumber\\
\sigma|_{t=0}&=&0.\nonumber
\end{eqnarray}
Here we used that $\beta_{g^0}(g^0)=0$ and the compatibility condition  $\mathcal H(g^0)=\eta|_{t=0}$. Lemma \ref{selfmap} below and the parabolic estimate of Theorem \ref{linear} show that for any $K$, $S$ maps $M_K^T(g^0)$ to itself, if $T$ is small enough.

Finally, for any $w_1, w_2\in M_K^T(g^0)$, $S(w_1)-S(w_2)$ similarly satisfies a linear initial-boundary value problem of the form (\ref{linear}). Then, the estimate of Lemma \ref{lipconst} below shows that $S$ is a contraction for small $T>0$. 

The uniform bound of existence time follows from the fact that a uniform bound of $\kappa$ implies uniform bounds of the constants of Lemmata \ref{selfmap}, \ref{lipconst}, and the constant $C_8$ of the parabolic estimate of Theorem \ref{linear}.
\end{proof}

\subsubsection{Lemmata \ref{selfmap} and \ref{lipconst}}
\begin{lemma}\label{selfmap}
Let $w\in M_K^T(g^0)$ for some $K>0$ and $T\leq T_o(K,g^0)$. Then, there exists a constant $C(K,\tilde g,\eta)$ and a function $\zeta:[0,+\infty)\rightarrow [0,+\infty)$ with $\zeta(T)\rightarrow 0$ as $T\rightarrow 0$, such that the following estimate holds:
\be
\leftdn \widehat F_w\rightdn_{L_p(M_T)}+\leftdn D_w\rightdn_\bnd+\leftdn \widehat G_w\rightdn_\bnd\leq C(K,\tilde g, \eta)\zeta(T),\nonumber
\ee
where  $C(K,\tilde g,\eta)= C\left(K,\sup_t \left\{\leftdn \tilde g_t- g^0 \rightdn_{W^{2,p}(M^o)}+\leftdn \partial_t\tilde g_t \rightdn_{L_p(M^o)}\right\},  \leftdn g^0 \rightdn_{W^{2,p}(M^o)},|\eta-\eta_0|_{\epsilon,\frac{\epsilon}{2}}\right)$.

\end{lemma}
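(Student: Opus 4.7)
Set $\sigma = w - g^0$ and $\tau = \tilde g - g^0$; both vanish at $t=0$, with $\leftdn \sigma \rightdn_\sob$ and $\leftdn \tau \rightdn_\sob$ bounded in terms of $K$ and $\Lambda$. Lemma \ref{emb2}(1) then yields
$$|\sigma|_1 + |\tau|_1 \leq C(K,\Lambda)\,T^\gamma, \qquad \gamma = \tfrac{1}{2}\big(1-\tfrac{n+3}{p}\big) > 0,$$
so $w$ stays uniformly close to $g^0$ as a metric and all nonlinear coefficients of $w$ that will appear below are uniformly bounded in $L^\infty(M_T)$.

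For $\widehat F_w$, I would exploit the identity $\tr_w\widehat\nabla^2 w - \tr_{g^0}\widehat\nabla^2 w = (w^{-1}-(g^0)^{-1})^{pq}\widehat\nabla^2_{pq}w$ to rewrite
$$\widehat F_w = \mathcal R(w,\widehat\nabla w) - \mathcal L_{V(w)}w + \big(w^{-1}-(g^0)^{-1}\big)^{pq}\widehat\nabla^2_{pq}w + \tr_{g^0}\widehat\nabla^2 g^0.$$
The first two pieces are at most quadratic in the $C^0$-bounded quantities $\widehat\nabla w, \widehat\nabla\tilde g$ and linear in $\widehat\nabla^2 w, \widehat\nabla^2 \tilde g$ (both $L^p(M_T)$-bounded), so their $L^p(M_T)$ norms are $O(T^{1/p})$. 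The third combines a $C^0$ factor of size $O(T^\gamma)$ with an $L^p$-bounded factor, giving $O(T^\gamma)$. The fourth is a fixed $L^p(M^o)$ function, contributing $O(T^{1/p})$.

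For $D_w$ and $\widehat G_w$ on $\partial M_T$, both vanish at $t=0$ by the compatibility conditions $\beta_{g^0}(g^0)=0$, $\mathcal W(g^0,g^0)=0$, $\mathcal H(g^0)=\eta_0$. The key algebraic observation is
$$\left.\tfrac{d}{ds}\right|_{s=0}\mathcal W(g^0+s\sigma,\,g^0) = \beta_{g^0}(\sigma),$$
which cancels the naive linear-in-$\sigma$ part of $\beta_{g^0}(w) - \mathcal W(w,g^0)$. Substituting $\tilde g = g^0 + \tau$ and Taylor-expanding, I would conclude that every remaining term in $D_w$ is a product of two factors drawn from $\{\sigma, \widehat\nabla\sigma, \tau, \widehat\nabla\tau\}$ with $L^\infty$-bounded coefficients. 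The analogous quadratic structure for $\widehat G_w - (\eta - \eta_0)$ follows from
$$\mathcal H(w) - \mathcal H(g^0) - \mathcal H'_{g^0}(\sigma) = \int_0^1 \big[\mathcal H'_{g^0+s\sigma} - \mathcal H'_{g^0}\big](\sigma)\,ds.$$
Each resulting term $A\cdot P_1 P_2$ is then bounded in $\bnd$ by invoking Lemma \ref{product} on a coordinate patch (after localizing with $\hat\rho_s$): the three summands in the product estimate each contain either $|P_1 P_2|_0 = O(T^{2\gamma})$ or a factor $|P_i|_0 = O(T^\gamma)$ paired with a seminorm of the companion factor, which is in turn bounded through Lemma \ref{emb1}(1). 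The $L^p(\partial M_T)$ contribution is similarly $O(T^{2\gamma+1/p})$. Lastly, $\eta-\eta_0 \in C^{\epsilon,\epsilon/2}$ vanishes at $t=0$, so the embedding $C^{\epsilon,\epsilon/2}\hookrightarrow\bnd$ combined with interpolation gives $\leftdn \eta-\eta_0 \rightdn_\bnd \leq C|\eta-\eta_0|_{\epsilon,\epsilon/2}\,T^{(\epsilon-\lambda)/2}$.

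The main obstacle is precisely the $\Lpcal$ seminorm estimate for $D_w$: without the cancellation identity above, the term $\beta_{g^0}(\sigma)$ would contribute $\leftdn \widehat\nabla\sigma \rightdn_\bnd \leq CK$ by Lemma \ref{emb1}(1), with \emph{no} $T^\gamma$ decay, destroying the self-map property. Verifying that this bare linear-in-$\sigma$ contribution is algebraically cancelled by the $s$-derivative of $\mathcal W$ at $(g^0,g^0)$ is the decisive step; once this is done, all surviving contributions are genuine products of small factors and the product estimate delivers the needed decay. Setting $\zeta(T) = T^{\min(\gamma,\,1/p,\,(\epsilon-\lambda)/2)}$ completes the argument.
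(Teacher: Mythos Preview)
Your approach is essentially the paper's: both use the integral Taylor remainder to rewrite $\mathcal H(w)-\mathcal H(g^0)-\mathcal H'_{g^0}(\sigma)$ and $\mathcal W(w,\tilde g)-\beta_{g^0}(\sigma)$ as integrals of linearization differences, and both estimate the resulting expressions via Lemma \ref{product} together with the embeddings of Lemmata \ref{emb1} and \ref{emb2}. Your treatment of $\widehat F_w$ and $\widehat G_w$ is correct and matches the paper.

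There is one inaccuracy in your analysis of $D_w$. After cancelling $\beta_{g^0}(\sigma)$ against the $\sigma$-linearization of $\mathcal W(\cdot,g^0)$, you assert that every surviving term is a product of \emph{two} factors from $\{\sigma,\widehat\nabla\sigma,\tau,\widehat\nabla\tau\}$. This is not so: expanding $\mathcal W(w,\tilde g)-\mathcal W(w,g^0)=w_{lr}w^{pq}\big(\Gamma(g^0)-\Gamma(\tilde g)\big)^r_{pq}$ produces a contribution of the schematic form $(\text{bounded})\cdot\widehat\nabla\tau$, \emph{linear} in $\tau$ with coefficients that do not vanish at $t=0$. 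In the paper's decomposition this is exactly the isolated term $-\mathcal W(g^0,\tilde g)$. Your product-estimate mechanism does not cover it, and by your own logic a bare $\widehat\nabla\tau$ would contribute $\leftdn\widehat\nabla\tau\rightdn_\bnd\leq C_1\leftdn\tau\rightdn_\sob$ with no apparent decay---precisely the obstruction you flagged for $\beta_{g^0}(\sigma)$.

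The repair is an asymmetry between $\sigma$ and $\tau$ that you did not exploit: while $\leftdn\sigma\rightdn_\sob\leq K$ carries no decay, the background is controlled by a \emph{sup-in-time} norm, so integrating over $[0,T]$ gives
\[
\leftdn\tau\rightdn_\sob\;\leq\; C\,T^{1/p}\sup_t\Big(\leftdn\tilde g_t-g^0\rightdn_{W^{2,p}(M^o)}+\leftdn\partial_t\tilde g_t\rightdn_{L_p(M^o)}\Big),
\]
after which Lemma \ref{emb1}(1) yields $\leftdn\widehat\nabla\tau\rightdn_\bnd=O(T^{1/p})$ directly. With this observation added (and $T^{1/p}$ absorbed into $\zeta(T)$), your argument goes through and coincides with the paper's.
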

\begin{proof}
Since $w\in M_K^T(g^0)$, Lemma \ref{core2} below implies that $w\in C^1(\overline{M_T})$ and therefore
$$ |\mathcal R(w,\widehat\nabla w) |_h  \leq C\left(K,\leftdn g^0 \rightdn_\sob\right).$$
This gives
$$ \leftdn \mathcal R(w,\widehat\nabla w) \rightdn_{L_p(M_T)} \leq C\left(K,\leftdn g^0 \rightdn_{W^{2,p}(M^o)}\right) \zeta(T).$$
 Next, we estimate 
\begin{eqnarray}
\leftdn\mathcal L_{V(w)} w\rightdn_{L_p(M_T)} &\leq& T^{1/p}\sup_t\leftdn \mathcal L_{V(w)}w|_t \rightdn_{L_p(M^o)} \nonumber\\ &\leq& C\left(K, \sup_t \leftdn \tilde g_t \rightdn_{W^{2,p}(M^o)}\right)\zeta(T)\nonumber.
\end{eqnarray}
We also have
$$\leftdn \tr_{g^0}\widehat\nabla^2 g^0\rightdn_{L_p(M_T)}\leq C\left(\leftdn g^0 \rightdn_{W^{2,p}(M^o)}\right)T^{1/p}.$$
Combining these estimates we obtain 
 $$\leftdn \mathcal R(w,\widehat\nabla w)-\mathcal L_{V(w)} w+\tr_{g^0}\widehat\nabla^2 g^0\rightdn_{L_p(M_T)}\leq C\left(K,\sup_t \leftdn\tilde g(t) \rightdn_{W^{2,p}(M^o)}\right)\zeta(T).$$
To estimate the rest of $F_w$ we estimate using Lemma \ref{core2}:
\begin{eqnarray}
\leftn(( g^0)^{ij}-w^{ij})\widehat\nabla^2_{i,j}w\rightn_h&\leq& C\max_{k,l}\leftn((g^0)^{ij}-w^{ij})\widehat\nabla^2_{i,j}w_{k,l}\rightn\nonumber\\
&\leq& C\left(K,\leftdn g^0\rightdn_\sob\right) \max_{i,j,k,l}\leftn\widehat\nabla^2_{i,j}w_{k,l}\rightn\nonumber\\
&\leq& C\left(K, \leftdn g^0\rightdn_\sob\right)\zeta(T) \leftn\widehat\nabla^2 w\rightn_{h}.\nonumber
\end{eqnarray}
This gives the estimate $$\leftdn \tr_{g^0}\widehat\nabla^2 w-\tr_w\widehat\nabla^2 w \rightdn_{L_p(M_T)}\leq C\left(K,\leftdn g^0\rightdn_{W^{2,p}(M^o)}\right)\zeta(T)$$ and proves
$$\leftdn \widehat F_w \rightdn_{L_p(M_T)}\leq C\left(K, \sup_t \leftdn\tilde g(t)\rightdn_{W^{2,p}(M^o)}\right)\zeta(T).$$
It now remains to control the norms of $\widehat G_w$ and $D_w$. Given any $w\in M_K^T(g^0)$ (we assume that $T<T_o(K, g^0)$), define $h=w- g^0$, and for every $0\leq s\leq 1$
\begin{eqnarray} 
	g_s(x,t)&=&g^0(x)+s\cdot h(x,t)\nonumber
\end{eqnarray}
Then by the fundamental theorem of calculus we get that
$$2\mathcal H(w)-2\mathcal H(g^0)=\int_0^1 2\mathcal H'_{g_s}(h)ds$$
and therefore
\begin{eqnarray}
	\widehat G_w&:=&2\mathcal H'_{ g^0}(h)-\left(2\mathcal H(w)-2\mathcal H( g^0 )\right)+2(\eta(x,t)-\eta(x,0))\nonumber\\
	&=&\int_0^1 [2\mathcal H'_{g_0}(h)-2\mathcal H'_{g_s}(h)]ds +2(\eta(x,t)-\eta(x,0)).\nonumber
\end{eqnarray}
Now, denoting $A_s:=2\mathcal H'_{g_0}(h)-2\mathcal H'_{g_s}(h)$ we calculate
\begin{multline}
	A_s=\underbrace{tr_{g_0^T}(\nabla_{0,N_0}h)-tr_{g_s^T}(\nabla_{s,N_s}h)}_{\alpha_1^s}\\+\underbrace{2\delta_{0,\partial M}(h(N_0)^T)-2\delta_{s,\partial M}(h(N_s)^T)}_{\alpha_2^s}
	+\underbrace{h(N_s,N_s)\mathcal H(g_s)-h(N_0,N_0)\mathcal H(g_0)}_{\alpha_3^s} \label{As}
\end{multline}
which, in the coordinates of the fixed atlas, are
\begin{eqnarray}
	\mathcal H(g)&=&\frac{1}{2}\left(g^{\alpha\beta}\nu^i\partial_i(g_{\alpha\beta})+2\partial_\alpha(\nu^\alpha)+2g^{\alpha\beta}g_{0\beta}\partial_\alpha(\nu^0)\right)\nonumber\\	
	\alpha_1^s&=&(g_0^{\alpha\beta}\nu_0^i-g_s^{\alpha\beta}\nu_s^i)\partial_i h_{\alpha\beta}-2(g_0^{\alpha\beta}\nu_0^{i}\Gamma^l_{o,i\alpha}-g_s^{\alpha\beta}\nu_s^{i}\Gamma^l_{s,i\alpha})h_{l\beta} \nonumber\\
	\alpha_2^s&=&\left(g_s^{\alpha\beta}\nu_s^i-g_o^{\alpha\beta}\nu_o^i\right)\partial_\alpha h_{i\beta}+
\left(g_s^{\alpha\beta}\partial_\alpha(\nu_s^i)-g_o^{\alpha\beta}\partial_{\alpha}(\nu_o^i) \right)h_{i\beta}\nonumber\\
	&&+\left(g_o^{\alpha\beta}\nu_o^i\bar\Gamma^j_{o,\alpha\beta}-g_s^{\alpha\beta}\nu_s^i\bar\Gamma^j_{s,\alpha\beta}\right)h_{ij}\nonumber \\
	\alpha_3^s&=&\frac{1}{2}\left\{
	\left(\nu_s^i\nu_s^j\nu_s^k g_s^{\alpha\beta}\partial_k (g_{s,\alpha\beta})-\nu_0^i\nu_0^j\nu_0^k g_0^{\alpha\beta}\partial_k (g_{0,\alpha\beta})\right)h_{ij}\right.\nonumber\\	&&+2\left(\nu_s^i\nu_s^j\partial_\alpha(\nu_s^\alpha)-\nu_0^i\nu_0^j\partial_\alpha(\nu_0^\alpha)\right)h_{ij}\nonumber\\
&&	+2\left(\nu_s^i\nu_s^j g_s^{\alpha\beta}g_{s,0\beta}\partial_{\alpha}(\nu_s^0)-
	\nu_0^i\nu_0^j g_0^{\alpha\beta}g_{0,0\beta}\partial_{\alpha}(\nu_0^0)\right)h_{ij},\nonumber	
\end{eqnarray}
where $N_s=\nu_s^i\partial_i=-\frac{g_s^{0i}}{(g_s^{00})^{1/2}}\partial_i$ is the outward unit normal, $\bar\Gamma$ the Christoffel symbols of the connection induced on $\partial M$, and $g_s^{ij}$ represents the inverse of the matrix $g_{s,ij}$(i.e the induced metric on the cotangent bundle). To simplify notation, $g_s^{\alpha\beta}$ denotes the inverse of the matrix $\{g_{\alpha\beta}\}_{\alpha,\beta=1,...,n}$.\\

Now, to indicate how the estimates of this lemma are established, we show how the term
\be (g_0^{\alpha\beta}\nu_0^i-g_s^{\alpha\beta}\nu_s^i)\partial_i h_{\alpha\beta}=\left[(g_0^{\alpha\beta}-g_s^{\alpha\beta})\nu_0^i+(\nu_0^i-\nu_s^i)g_s^{\alpha\beta}\right]\partial_i h_{\alpha\beta} \nonumber\ee
is estimated. We have
\begin{eqnarray}
\leftn \hat\rho(g_0^{\alpha\beta}-g_s^{\alpha\beta})\nu_0^i\partial_ih_{\alpha\beta}\rightn_\Lpcall
&\leq& C\leftn g_0^{\alpha\beta}-g_s^{\alpha\beta}\rightn_0 \leftn \partial_ih_{\alpha\beta} \rightn_0 \leftn \nu_0^i\rightn_0 \nonumber\\
&&+\leftn g_0^{\alpha\beta}-g_s^{\alpha\beta}\rightn_0 \leftn \nu_0^i\rightn_0\leftn \hat\rho\partial_ih_{\alpha\beta}\rightn_\Lpcall\nonumber\\
&&+\leftn g_0^{\alpha\beta}-g_s^{\alpha\beta}\rightn_0 \leftn \partial_ih_{\alpha\beta}\rightn_0\leftn \hat\rho\nu_0^i\rightn_\Lpcall\nonumber\\
&&+\leftn \nu_0^i\rightn_0 \leftn \partial_ih_{\alpha\beta}\rightn_0\leftn \hat\rho( g_0^{\alpha\beta}-g_s^{\alpha\beta})\rightn_\Lpcall\nonumber\\
&\leq& C\left(K,\rightdn g^0\leftdn_\sob\right)\zeta(T) \leftdn h\rightdn_\sob,\nonumber
\end{eqnarray}
where the last inequality follows from Lemma \ref{core2}. 

The terms that are of zeroth order in $h$, for example $2(g_0^{\alpha\beta}\nu_0^{i}\Gamma^l_{o,i\alpha}-g_s^{\alpha\beta}\nu_s^{i}\Gamma^l_{s,i\alpha})h_{l\beta}$, are of first order in $g_0$ and $g_s$, but they are estimated in a similar way:
\begin{multline}
2(g_0^{\alpha\beta}\nu_0^{i}\Gamma^l_{o,i\alpha}-g_s^{\alpha\beta}\nu_s^{i}\Gamma^l_{s,i\alpha})h_{l\beta}=\\
2\left[(g_0^{\alpha\beta}-g_s^{\alpha\beta})\nu_0^i \Gamma_{0,i\alpha}^l+g_s^{\alpha\beta}(\nu_0^i-\nu_s^i)\Gamma_{0,i\alpha}^l+g_s^{\alpha\beta}\nu_s^i(\Gamma_{0,i\alpha}^l-\Gamma_{s,i\alpha}^l)\right]h_{l\beta}.\nonumber
\end{multline}

For example, the term $g_s^{\alpha\beta}\nu_s^i(\Gamma_{0,i\alpha}^l-\Gamma_{s,i\alpha}^l) h_{l\beta}$ can be estimated again using Lemma \ref{core2};
\begin{eqnarray}
\leftn \hat\rho g_s^{\alpha\beta}\nu_s^i(\Gamma_{0,i\alpha}^l-\Gamma_{s,i\alpha}^l) h_{l\beta}\rightn_\Lpcall &\leq&C\leftn g_s^{\alpha\beta}\nu_s^i\rightn_{0}\leftn \Gamma_{0,i\alpha}^l-\Gamma_{s,i\alpha}^l\rightn_{0}\leftn h_{l\beta}\rightn_0\nonumber\\
&+&\leftn g_s^{\alpha\beta}\nu_s^i \rightn_0  \leftn h_{l\beta}\rightn_0 \leftn \hat\rho(\Gamma_{0,i\alpha}^l-\Gamma_{s,i\alpha}^l)\rightn_\Lpcall\nonumber\\
&+&\leftn g_s^{\alpha\beta}\rightn_0 \leftn h_{l\beta}\rightn_0 \leftn \Gamma_{0,i\alpha}^l-\Gamma_{s,i\alpha}^l \rightn_0\leftn \hat\rho\nu_s^i\rightn_\Lpcall\nonumber\\
&+&\leftn \nu_s^i \rightn_0 \leftn h_{l\beta}\rightn_0 \leftn \Gamma_{0,i\alpha}^l-\Gamma_{s,i\alpha}^l \rightn_0 \leftn\hat\rho g_s^{\alpha\beta} \rightn_\Lpcall\nonumber\\
&+&\leftn \nu_s^ig_s^{\alpha\beta}\rightn_0 \leftn \Gamma_{0,i\alpha}^l-\Gamma_{s,i\alpha}^l \rightn_0 \leftn \hat\rho h_{l\beta}\rightn_\Lpcall\nonumber\\
&\leq& C\left(K,\leftdn g^0\rightdn_\sob\right)\zeta(T)\leftdn h\rightdn_\sob.\nonumber
\end{eqnarray}

The procedure indicated above carries over to estimate all the terms of $A_s$, providing us with the estimate
\begin{eqnarray}
\leftn A_s \rightn_{0,V_T} +\leftn \hat\rho A_s \rightn_\Lpcall &\leq&  C\left(K,\leftdn g^0\rightdn_\sob\right)\zeta(T)\leftdn h\rightdn_\sob\nonumber\\
&\leq&  C\left(K,\leftdn g^0\rightdn_\sob\right)\zeta(T),\nonumber
\end{eqnarray}
since $\leftdn h \rightdn_\sob\leq K$. Now, under the assumptions for $\eta$, this proves that 
\be \leftdn \widehat G_w \rightdn_\bnd\leq  C\left(K,\leftdn g^0\rightdn_\sob, \left|\eta-\eta_0\right|_{\epsilon,\frac{\epsilon}{2}}\right)\zeta(T).\nonumber \ee

Similarly, the linearization of the map $w\mapsto\mathcal W(w,\tilde g)$ at $u(x,t)$ is given by
\be 
	\mathcal W'_u(\tau)_l=\beta_u(\tau)_l+(\tau_{lr}u^{pq}-u_{lr}\tau_{ij}u^{ip}u^{jq})(\Gamma(u)_{pq}^r-\tilde\Gamma_{t,pq}^r).\label{VFlinearization}
\ee
So, given any $w\in M_K^T( g^0)$, $T<T_0$, since $\beta_{g^0}( g^0)=0$ we have
\begin{eqnarray}
	(D_w)_l&=&\beta_{g^0}(w)_l-(\mathcal W(w,\tilde g)-\mathcal W(g^0,\tilde g))_l-\mathcal W(g^0,\tilde g)_l\nonumber\\
&=&\beta_{g^0}(h)_l-\int_0^1 \mathcal W'_{g_s}(h)_l ds-\mathcal W(g^0,\tilde g)_l\nonumber\\
	&=&\int_0^1 (\beta_{g^0}(h)-\beta_{g_s}(h))_l ds+\nonumber \\ &&\int_0^1 (h_{lr}g_{s}^{pq}-g_{s,lr}h_{ij}g_s^{ip}g_s^{jq})(\Gamma_{s,pq}^r-\tilde\Gamma_{t,pq}^r) ds -\mathcal W(g^0,\tilde g)_l.\nonumber
\end{eqnarray}
	
Again, using a coordinate system intersecting the boundary, we have:
\begin{eqnarray}
\beta(h)&=&g^{ij}\left(\partial_i h_{jl}-h_{rl}\Gamma_{ij}^r-h_{jr}\Gamma_{il}^r\right)-\frac{1}{2}\partial_l(g^{ij}h_{ij})\nonumber\\
\beta_{g^0}(h)_l-\beta_{g_s}(h)_l&=&( g^{0,ij}-g_s^{ij})\partial_i h_{jl}-(g^{0,ij}\Gamma_{0,ij}^r-g_s^{ij}\Gamma_{s,ij}^r)h_{rl}\nonumber\\&&-(g^{0,ij}\Gamma_{0,il}^r-g_s^{ij}\Gamma_{s,il})h_{jr}\nonumber\\
&&-\frac{1}{2}\left[(\partial_l g^{0,ij}-\partial_l g_s^{ij})h_{ij}+(g^{0,ij}-g_s^{ij})\partial_l h_{ij}\right].\nonumber
\end{eqnarray}
Finally, a series of estimates of the same form as those used for the mean curvature part of the boundary conditions gives the required estimate:
\begin{eqnarray}
\leftdn D_w \rightdn_\bnd&\leq& C\left(K,\leftdn \tilde g-g^0\rightdn_\sob,\leftdn g^0 \rightdn_{W^{2,p}(M^o)}\right)\zeta(T)\nonumber\\
&\leq& C(K,\tilde g,\eta) \zeta(T).\nonumber
\end{eqnarray}
\end{proof}

A similar line of reasoning also proves the following lemma. See also \cite{MR1163209}.
\begin{lemma}\label{lipconst}
Let $K>0$ and $T\leq T_o(K,g^0)$. Then, there exists a constant $C(K,\tilde g)$ such that for every $w_1,w_2\in M_K^T( g^0)$ the following estimate holds:
\begin{multline}
\leftdn F_{w_1}-F_{w_2}\rightdn_{L_p(M_T)}+\leftdn D_{w_1}-D_{w_2}\rightdn_\bnd+\leftdn G_{w_1}-G_{w_2}\rightdn_\bnd \\ \leq C(K,\tilde g)\zeta(T)\leftdn w_1-w_2\rightdn_\sob,
\end{multline}
where 
$C(K,\tilde g)= C\left(K, \sup_t \left\{\leftdn \tilde g- g^0 \rightdn_{W^{2,p}(M^o)}+\leftdn \partial_t\tilde g \rightdn_{L_p(M^o)}\right\}, \leftdn g^0 \rightdn_{W^{2,p}(M^o)}\right).$
\end{lemma}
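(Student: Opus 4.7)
The plan is to mirror the structure of Lemma \ref{selfmap}, but now differentiating the nonlinearities along a segment joining $w_2$ to $w_1$ instead of $g^0$ to $w$. Throughout, given $w_1,w_2\in M_K^T(g^0)$, set $w_s=w_2+s(w_1-w_2)$ for $s\in[0,1]$ and $\delta w=w_1-w_2$. Note that for $T\le T_o(K,g^0)$ each $w_s$ is a genuine metric, $\|w_s-g^0\|_\sob\le K$, and by Lemma \ref{emb2} combined with Lemma \ref{core2} (applied in the excerpt), $w_s$ and $w_s^{-1}$ are uniformly bounded in $C^1(\overline{M_T})$ by a constant depending only on $K$ and $\|g^0\|_{W^{2,p}(M^o)}$. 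The key qualitative point is that the difference $\delta w$ satisfies $\delta w|_{t=0}=0$, so Lemma \ref{emb2} gives extra $T^\gamma$ decay for the $C^1$ and $\Lpcal$ norms of $\delta w$, which produces the function $\zeta(T)\to 0$ in the final estimate.

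For the interior piece, write $F_{w_1}-F_{w_2}=\int_0^1\tfrac{d}{ds}F_{w_s}\,ds$. Each term in $F_w=\mathcal R(w,\widehat\nabla w)-\mathcal L_{V(w)}w-\tr_{g^0}\widehat\nabla^2 w+\tr_w\widehat\nabla^2 w$ is polynomial in $w$, $w^{-1}$, $\widehat\nabla w$, $\widehat\nabla^2 w$ (and in $\tilde g$, $\widehat\nabla\tilde g$, $\widehat\nabla^2\tilde g$, $\partial_t\tilde g$). Differentiating along $w_s$, each contribution to $F_{w_1}-F_{w_2}$ is the product of $\delta w$ or $\widehat\nabla\delta w$ or $\widehat\nabla^2\delta w$ with a coefficient formed from $w_s$ up to second order and from $\tilde g$ up to second order. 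The terms with at most first order $\widehat\nabla^{\le 1}\delta w$ are estimated pointwise using the $C^1$ bound on $w_s$, then by Lemma \ref{emb2}(1) we extract a $T^\gamma$ factor from $|\delta w|_1$, and finally a $T^{1/p}$ (or $T^{1/p}$ times the $W^{2,p}$-norm of $\tilde g$) from integration over $M_T$; the terms involving $\widehat\nabla^2\delta w$ (namely those coming from $\tr_w\widehat\nabla^2 w-\tr_{g^0}\widehat\nabla^2 w$) carry the factor $w_s^{-1}-(g^0)^{-1}$ as coefficient, whose $C^0$ norm is controlled by $T^\gamma$ again by Lemma \ref{emb2}(1). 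Collecting, we obtain
\[
\|F_{w_1}-F_{w_2}\|_{L_p(M_T)}\le C(K,\tilde g)\,\zeta(T)\,\|\delta w\|_\sob.
\]

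For the two boundary pieces the strategy is identical, only the function-space bookkeeping is heavier. We again write $D_{w_1}-D_{w_2}=\int_0^1(\beta_{g^0}(\delta w)-\mathcal W'_{w_s}(\delta w))\,ds$ and, using the explicit formulas (in particular \eqref{VFlinearization} and the local expression for $\beta$ and for $\mathcal H$ from Lemma \ref{local}), decompose the integrand as a sum of products of three or four factors taken among $\delta w$, $\widehat\nabla\delta w$, $w_s$, $w_s^{-1}$, $\widehat\nabla w_s$, $\tilde g$, $\widehat\nabla\tilde g$, and $\nu_s$. Each factor that is not $\delta w$ or its derivative either belongs to $C^{1+\alpha,(1+\alpha)/2}(\overline{M_T})$ (so in particular to $\bnd$ with norm controlled by $K$, $\tilde g$, and $g^0$, as used in Lemma \ref{selfmap}), or has the form of a difference $w_s-g^0$, $w_s^{-1}-(g^0)^{-1}$, $\Gamma(w_s)-\Gamma(g^0)$, or $\nu_s-\nu_0$, all of which are small in $C^0$ and in $\Lpcal$ by a power of $T$. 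The product estimate of Lemma \ref{product} then converts these pointwise/fractional-norm bounds into $\bnd$ bounds, and the $\delta w$ factor supplies the $\|\delta w\|_\sob$ on the right together with an extra $T^\gamma$ (via Lemma \ref{emb2}(2) for $\langle\delta w\rangle$ on $\partial M_T$ and via the vanishing of $\delta w$ at $t=0$). The same scheme applied to the local formula of Lemma \ref{local} produces the corresponding estimate for $G_{w_1}-G_{w_2}$; notice that here the boundary data $\eta$ drops out because it cancels in the difference. Combining the three bounds yields the statement of the lemma.

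The main obstacle is bookkeeping for the boundary terms: one needs to decompose every entry of $\alpha_1^s, \alpha_2^s, \alpha_3^s$ from \eqref{As} (and the analogous pieces of $D_w$) so that precisely one factor of $\delta w$ (or its first derivative) appears and so that each remaining coefficient is either uniformly $C^{1+\alpha,(1+\alpha)/2}$-bounded or small in $C^0\cap \Lpcal$ as $T\to 0$; once this is done the product estimate of Lemma \ref{product} and the embeddings of Lemmas \ref{emb1}, \ref{emb2} reduce the problem to exactly the same kind of term-by-term estimates carried out in the proof of Lemma \ref{selfmap}, only with $\delta w$ in place of $h$. Uniformity of $C(K,\tilde g)$ in $T$ and the proportionality to $\zeta(T)$ follow from the same mechanism as before.
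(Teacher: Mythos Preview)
Your argument is correct and follows essentially the same approach as the paper: the paper also sets $h=w_1-w_2$, interpolates along $g_s=w_2+s h$ (there is a typo in the paper's definition), writes $G_{w_1}-G_{w_2}=\int_0^1[2\mathcal H'_{g^0}(h)-2\mathcal H'_{g_s}(h)]\,ds$ and the analogous integral for $D_{w_1}-D_{w_2}$, and then reuses verbatim the term-by-term estimates of Lemma~\ref{selfmap} together with Lemma~\ref{emb2} to extract the factor $\zeta(T)$ from $h|_{t=0}=0$ and from $g_s-g^0|_{t=0}=0$. The only cosmetic difference is that for the interior term $F_{w_1}-F_{w_2}$ the paper does not pass through the integral $\int_0^1\tfrac{d}{ds}F_{w_s}\,ds$ but instead splits the second-order piece directly as $(g^{0,ij}-w_1^{ij})\widehat\nabla^2_{i,j}(w_1-w_2)+(w_2^{ij}-w_1^{ij})\widehat\nabla^2_{i,j}w_2$, which is equivalent to your linearization argument.
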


\subsubsection{Technical Lemmata.}

\begin{lemma}\label{inverse}
Let $\delta_0>0$. There exists a positive constant $C$, such that for matrix valued functions $g, g_l\in L^\infty(V_T)\cap\mathcal L_p^{\alpha,\beta}(V_T)$, $l=1,2$ for which $det(g_{ij}), det(g_{l,ij})\geq\delta_0$ and $g^{ii},g_l^{ii}\geq\delta_0$ holds:
\begin{eqnarray}
|\hat\rho g^{ij}|_\Lpcallab&\leq& C|g|_0\left(\left|\hat\rho g\right|_\Lpcallab + 1\right),\nonumber\\
|\hat\rho\left(g_1^{ij}-g_2^{ij}\right)|_\Lpcallab&\leq& C\cdot B_1\cdot B_2 \left(\left|\hat\rho\right(g_1-g_2\left)\right|_\Lpcallab + |g_1-g_2|_{0}\right),\nonumber \\
|\hat\rho (g^{00})^{-1/2}|_\Lpcallab&\leq& C\cdot|g|_0\left(\left|\hat\rho g\right|_{\mathcal L_p^{\alpha,\beta}} + 1\right),\nonumber\\
|\hat\rho\left((g_1^{00})^{-1/2}-(g_2^{00})^{-1/2}\right)|_\Lpcallab&\leq& C\cdot B_1\cdot B_2.\left(\left|\hat\rho\left(g_1-g_2\right)\right|_\Lpcallab + |g_1-g_2|_{0}\right).\nonumber
\end{eqnarray}
where $|g_l|_0\leq B_1$ and $|\hat\rho g_l|_{\mathcal L^{\alpha,\beta}_p(V_T)}\leq B_2$ and the constants depend on $\delta_0$ and the cutoff function $\hat\rho$.
\end{lemma}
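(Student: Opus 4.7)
The plan is to reduce both the matrix inverse $g \mapsto g^{-1}$ and the map $x \mapsto x^{-1/2}$ applied to $g^{00}$ to a composition of polynomial operations with a single scalar reciprocal or square root, and then iteratively apply the product estimate of Lemma \ref{product}, using the lower bounds $\det g \geq \delta_0$ and $g^{ii} \geq \delta_0$ in an essential way.

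For (1), Cramer's rule gives $g^{ij} = \pm M_{ij}(g)/\det g$, where $M_{ij}$ is a polynomial of degree $n$ in the entries of $g$. An induction on the number of factors based on Lemma \ref{product} yields the polynomial estimate
\begin{equation*}
|\hat\rho P(g)|_{\mathcal L_p^{\alpha,\beta}(V_T)} \leq C_P(|g|_0)\bigl(|\hat\rho g|_{\mathcal L_p^{\alpha,\beta}(V_T)} + 1\bigr)
\end{equation*}
for any polynomial $P$ of fixed degree, hence also for $\det g$ and $M_{ij}(g)$. To pass from $\det g$ to its reciprocal, I will use the pointwise identity
\begin{equation*}
\Delta_{\mu,h}(\hat\rho u^{-1})(y,t) = \Delta_{\mu,h}\hat\rho(y) \left(\frac{1}{u(y+he_\mu,t)} + \frac{1}{u(y,t)}\right) - \frac{\Delta_{\mu,h}(\hat\rho u)(y,t)}{u(y+he_\mu,t)\, u(y,t)},
\end{equation*}
and its $t$-analogue, with $u = \det g \geq \delta_0$ and the Lipschitz bound $|\Delta_{\mu,h}\hat\rho| \leq C\min(h,1)$; the weighted $h$-integrals coming from the first term on the right converge because $\alpha,\beta < 1$. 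A final application of Lemma \ref{product} to the product $M_{ij}(g) \cdot (\det g)^{-1}$ then assembles the three pieces into (1).

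Part (2) follows from the matrix identity
\begin{equation*}
g_1^{ij} - g_2^{ij} = -g_1^{ik}(g_{1,kl} - g_{2,kl})\, g_2^{lj},
\end{equation*}
to which a three-factor iteration of Lemma \ref{product} applies: all $L^\infty$ norms of $g_l^{ij}$ are controlled via Cramer's rule together with $|g_l|_0 \leq B_1$ and $\det g_l \geq \delta_0$, while the $\mathcal L_p^{\alpha,\beta}$ seminorms $|\hat\rho g_l^{ij}|$ are controlled by part (1) and the hypothesis $|\hat\rho g_l|_{\mathcal L_p^{\alpha,\beta}(V_T)} \leq B_2$. For (3), I will use the algebraic identity
\begin{equation*}
u^{-1/2} - v^{-1/2} = \frac{v - u}{u^{1/2}\, v^{1/2}\bigl(u^{1/2} + v^{1/2}\bigr)},
\end{equation*}
applied to $u = g^{00}(y+he_\mu,t)$, $v = g^{00}(y,t)$ and its time analogue; the lower bound $g^{00} \geq \delta_0$ makes the denominator harmless, and the same cutoff bookkeeping as for the reciprocal produces a bound in terms of $|\hat\rho g^{00}|_{\mathcal L_p^{\alpha,\beta}(V_T)}$, which by part (1) is controlled by $|\hat\rho g|_{\mathcal L_p^{\alpha,\beta}(V_T)}$. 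Part (4) is obtained by combining this identity with the estimate from (2) applied to the difference $g_1^{00} - g_2^{00}$.

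The main technical obstacle I anticipate is the correct placement of $\hat\rho$ throughout the nonlinear manipulations: Lemma \ref{product} sees $\hat\rho$ only on the outside, so each rewriting of $\Delta_{\mu,h}$ of a composite expression must either isolate a term of the form $\Delta_{\mu,h}(\hat\rho\cdot\mathrm{stuff})$ or produce a factor $\Delta_{\mu,h}\hat\rho$, whose $L_p$ norm scales like $h$ and is absorbed by the weight $h^{-(1+p\alpha)}$ thanks to $\alpha < 1$. Once this bookkeeping is in place, the structural content of the lemma reduces to matrix algebra combined with Cramer's rule, all intermediate constants depending polynomially on $|g|_0$, $B_1$, $B_2$ and on $\delta_0^{-1}$.
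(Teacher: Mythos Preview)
Your proposal is correct and gives a self-contained proof. The paper's own proof is a one-line citation: ``The result follows from Weidemaier, Corollary A.3.'' So you are essentially spelling out the content of that reference. Your route via Cramer's rule, iterated use of the product estimate (Lemma~\ref{product}), and the algebraic difference identities for $u^{-1}$ and $u^{-1/2}$ is the standard way such composition estimates are proved in fractional Sobolev-type seminorms, and the cutoff bookkeeping you flag (isolating either $\Delta_{\mu,h}(\hat\rho\cdot\text{stuff})$ or a factor $\Delta_{\mu,h}\hat\rho$, the latter absorbed by the weight since $\alpha,\beta<1$) is exactly what makes the localized version go through.

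One minor remark: the linear prefactor $|g|_0$ displayed in items (1) and (3) of the statement is somewhat schematic. Cramer's rule produces cofactors that are polynomials of degree $n$ in the entries of $g$, so your argument naturally yields a constant depending \emph{polynomially} on $|g|_0$ (and on $\delta_0^{-1}$), not linearly. This is harmless: the statement allows $C$ to depend on $\delta_0$ and $\hat\rho$, and in the applications (Lemma~\ref{core2}) only a bound of the form $C(K,g^0,\delta)$ is ever used, so the precise power of $|g|_0$ is irrelevant.
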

\begin{proof} The result follows from Weidemaier, Corollary A.3. 
\end{proof}

Direct consequence of Lemma \ref{inverse} are the following estimates.
\begin{lemma}\label{core2}
Let $g_0, g_1\in M_K^T(g^0)$, $T\leq T_0(K, g^0)$,  $g_s=g_0+s(g_1-g_0)$ and $(U,\phi,\rho)$ a chart whose domain intersects the boundary, with the corresponding cutoff function $\rho$ and $\hat\rho=\rho\circ\phi^{-1}$. Let also  $\nu_s^i=-\frac{g_s^{0i}}{(g_s^{00})^{-1/2}}$ be the components of the outward unit normal to the boundary with respent to $g_s$. Then
\begin{enumerate}
\item $\leftn g_{s,ij}\rightn_{1,V_T}+|\hat\rho g_{s,ij}|_\Lpcall+|\hat\rho\partial_k g_{s,ij}|_\Lpcall\leq C(K, g^0)$.

\item $\leftn g_0^{ij}-g_s^{ij}\rightn_{1}+\leftn g_{0,ij}-g_{s,ij}\rightn_{1}\leq C(K,g^0,\delta)\zeta(T)\leftdn h\rightdn_\sob$.

\item $\leftn \hat\rho(g_0^{ij}-g_s^{ij})\rightn_\Lpcall+\leftn \hat\rho(g_{0,ij}-g_{s,ij})\rightn_\Lpcall\leq C(K, g^0,\delta)\zeta(T)\leftdn h\rightdn_\sob$.

\item $\leftn \hat\rho\partial_k g_s^{ij}\rightn_\Lpcall \leq C(K, g^0,\delta)$.

\item $\leftn \nu_s^i \rightn_0 +\leftn \hat\rho \nu_s^i \rightn_\Lpcall+\leftn \hat\rho \partial_\alpha \nu_s^i\rightn_\Lpcall\leq C(K, g^0,\delta)$.

\item $\leftn\nu_0^i-\nu_s^i\rightn_{1}+\leftn\hat\rho(\nu_0^i-\nu_s^i)\rightn_\Lpcall\leq C(K,g^0,\delta)\zeta(T)\leftdn h\rightdn_\sob$.

\item $\leftn\hat\rho\partial_\alpha \nu_s^i \rightn_\Lpcall\leq C(K,g^0,\delta)$.
\end{enumerate}
where $\displaystyle{\lim_{T\rightarrow 0^+}\zeta(T)=0}$.
\end{lemma}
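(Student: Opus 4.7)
The plan is to view each of the seven estimates as a consequence of three basic principles: (a) the Sobolev embeddings of Lemmas \ref{emb1}--\ref{emb2}, which control $C^0$ and $C^1$ norms of elements of $\sob$ and produce a factor $T^\gamma$ whenever the function vanishes at $t=0$; (b) the trace inequality $\leftdn\widehat\nabla u\rightdn_\bnd \leq C\leftdn u\rightdn_\sob$ of Lemma \ref{emb1}(1), which supplies the $\Lpcall$ control needed for first derivatives of $g_s$ in a boundary chart; and (c) the product estimate of Lemma \ref{product} and the inverse estimate of Lemma \ref{inverse}, which close the bookkeeping for $g_s^{ij}$, $(g_s^{00})^{-1/2}$ and the normal components. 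The decisive observation is that $g_s-g^0=(g_0-g^0)+s(g_1-g_0)$ lies in $W^{2,1}_{p,0}(M_T)$ uniformly in $s\in[0,1]$ with norm at most $2K$, while the difference $g_0-g_s=-s\,h$ with $h=g_1-g_0\in W^{2,1}_{p,0}(M_T)$, so all difference expressions automatically inherit an extra $\zeta(T)=CT^\gamma$ factor from Lemma \ref{emb2}.

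For (1), I would first apply the embedding $\sob\hookrightarrow C^{1+\alpha,(1+\alpha)/2}(\overline{M_T})$ to $g_s-g^0$ to obtain $\leftn g_{s}\rightn_1\leq C(K,g^0)$ uniformly in $T\leq T_0$; the $\Lpcall$ bound on $g_{s,ij}$ then follows from the H\"older embedding $C^{1+\alpha,(1+\alpha)/2}(\partial M_T)\hookrightarrow W^{\lambda,\lambda/2}_p$ (valid since $1+\alpha>1-1/p=\lambda$), while the $\Lpcall$ bound on $\partial_k g_{s,ij}$ is a direct application of Lemma \ref{emb1}(1). For (4), I would write $\partial_k g_s^{ij}=-g_s^{ia}g_s^{jb}\partial_k g_{s,ab}$ and combine Lemma \ref{inverse}(1) with Lemma \ref{product} and the bounds from (1), using the uniform positivity of $\det g_s$ and $g_s^{ii}$ that the choice $T\leq T_0(K,g^0)$ provides.

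For the difference estimates (2) and (3), I would apply Lemma \ref{emb2} directly to $h=g_1-g_0\in W^{2,1}_{p,0}(M_T)$, giving the $\zeta(T)$ smallness of $\leftn h\rightn_1$ and of $\leftn\hat\rho h\rightn_\Lpcall$; the corresponding bounds on the inverse differences $g_0^{ij}-g_s^{ij}$ are supplied by Lemma \ref{inverse}(2), taking $B_1, B_2$ from the already-established estimate (1). For (5), (6), (7), I would express $\nu_s^i=-g_s^{0i}(g_s^{00})^{-1/2}$ and estimate via Lemma \ref{product} together with Lemma \ref{inverse}(3)--(4); the derivative $\partial_\alpha \nu_s^i$ expands into a sum of products of inverse factors and first derivatives of $g_s$, each factor controlled by (1) or (4), with one factor placed in $\Lpcall$ and the others in $C^0$ in accordance with Lemma \ref{product}.

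The main obstacle is organizational rather than conceptual: in each bilinear or trilinear expansion (for instance in the difference $\Gamma_{0,ij}^r-\Gamma_{s,ij}^r$ and in the normal components $\nu_s^i$) one must carefully decide which factor is to be estimated in $C^0$ and which in $\Lpcall$, so that every term generated by Lemma \ref{product} is controlled, and one must arrange the $T^\gamma$ smallness to be picked up only from factors which vanish at $t=0$ (that is, from $h$ or $g_s-g^0$ via Lemma \ref{emb2}), ensuring that $\zeta(T)$ appears precisely in the difference estimates (2), (3), (6) and nowhere in the uniform bounds (1), (4), (5), (7).
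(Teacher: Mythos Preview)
Your proposal is correct and follows essentially the same route as the paper: use the convexity $g_s-g^0=(1-s)(g_0-g^0)+s(g_1-g^0)$ to place $g_s$ in $M_K^T(g^0)$, then combine Lemmas \ref{emb1}, \ref{emb2}, \ref{product} and \ref{inverse} exactly as you describe. One small correction: for the $C^1$ part of estimate (2) on the inverse difference $g_0^{ij}-g_s^{ij}$, Lemma \ref{inverse}(2) only supplies the $\Lpcall$ bound; the paper handles the $|\cdot|_1$ bound directly via the cofactor/determinant formula for the inverse together with $\det g_s\geq\delta/2$ and $|h|_1\leq C\zeta(T)\leftdn h\rightdn_\sob$, which is what you should invoke there as well.
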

\smallskip

\subsection{Regularity of the Ricci-DeTurck flow.}\label{comp}
The solution of the Ricci-DeTurck boundary value problem obtained in the previous section is in the Sobolev space $\sob$ for $p>n+3$, and therefore has only $C^{1+\alpha,\frac{1+\alpha}{2}}$ regularity in $\overline{M_T}$. In this section we show that certain higher order compatibility conditions on $\partial M$ are necessary and sufficient for higher regularity on $\partial M\times 0$. We also obtain an automatic smoothing effect of the flow for positive time (up to the boundary).

\subsubsection{Higher order compatibility conditions}
Assuming that $g(x,t)$ is a $C^{l+2,\frac{l}{2}+1}(\overline{M_T})$ solution to the Ricci-DeTurck flow 
$$\partial_t g=-2\Ric(g)+\mathcal L_{\mathcal W(g,\tilde g)} g,$$
we easily see that all the derivatives $h_k\equiv\partial_t^k g|_{t=0}\in C^{l+2-2k}(M)$, $0\leq k\leq \left[\frac{l}{2}\right]+1$ are determined by the initial data $g|_{t=0}=g^0\in C^{l+2}(\bar M)$, by differentiating the equation with respect to $t$, and then commuting $\partial_t^k$ with $\partial_i \partial_j$, which is possible as long as $2k+2<l+2$, i.e. $0\leq k\leq \left[\frac{l}{2}\right]$.\\

Moreover, if $g(x,t)$ satisfies the boundary conditions (\ref{bndry}), differentiating  with respect to $t$ we get:
\begin{eqnarray}
\partial_t^{k}\mathcal W(g_t,\tilde g_t)|_{t=0}&=&0,\label{gauge}\\
\partial_t^{k}\mathcal H(g_t)|_{t=0}&=&\partial_t^{k}\eta|_{t=0},\label{mean}\\
\left.\partial_t^{k}\left(g_{t}^T-\frac{\tr_{\gamma_t}(g_{t}^T)}{n}\gamma_t\right)\right|_{t=0}&=&0.\label{conformal}
\end{eqnarray}
So, from (\ref{conformal}) for $k\leq \left[ \frac{l}{2} \right] +1$, we see that additional conditions need to be satisfied by $h_k$, and hence by $g^0$, on $\partial M$. Similarly, for $k\leq \left[\frac{l+1}{2}\right]$ (so as $2k+1<l+2$ ), the $\partial_t^k$ derivatives commute with the space derivatives of the first order operators $\mathcal W, \mathcal H$ on (\ref{gauge}), (\ref{mean}), and give additional restrictions on the initial data on the boundary.\\

In particular, if $l>0$,  since $ (g^0)^T=\gamma|_{t=0}$ we see that $\stackrel{.}{\gamma}|_{t=0}$ is specified, up to a conformal factor by $h_1$:
\be
\stackrel{.}{\gamma}|_{t=0}=h_1^T-\frac{\tr_{\gamma_0} h_1^T}{n}\gamma_0 + f\gamma_0, \label{compatibility_conformal}
\ee
where $f$ is an arbitrary function.
Moreover, if $l>1$, $\dot\eta|_{t=0}$ is also specified by the initial data:
\be 
\dot\eta|_{t=0}=\mathcal H'_{ g^0}(h_1) .\label{compatibility_mean_curvature}
\ee

In the section below, we show how parabolic regularity implies that these conditions are also sufficient to obtain higher regularity of a solution to the Ricci-DeTurck boundary value problem.

\begin{theorem}
Let $g \in \sob$ be a solution to the Ricci-DeTurck boundary value problem (\ref{eq1}),(\ref{rdtbdata1})-(\ref{rdtbdata3}).  Let $l=k+\alpha$, $\alpha\leq 1-\frac{n+3}{p}$. Then the following hold.
\begin{enumerate}
\item (Interior regularity) Suppose that $\tilde g\in C^{l+2,\frac{l+2}{2}}(M^o\times [0,T])$. Then $g\in C^{l+2,\frac{l+2}{2}}(M^o\times [0,T])$. 
\item (Boundary regularity)  If $\eta\in C^{l+1,\frac{l+1}{2}}(\partial M_T)$, $\gamma\in C^{l+2,\frac{l+2}{2}}(\partial M_T)$, $\tilde g\in C^{l+2,\frac{l+2}{2}}(\overline{M_T})$ and the data $g^0$,$\eta$,$\gamma$,$\tilde g$ satisfy the necessary compatibility conditions, then $g\in C^{l+2,\frac{l+2}{2}}(\overline{M_T})$.
\item (Boundary regularity for positive time)   If $\eta\in C^{l+1,\frac{l+1}{2}}(\partial M_T)$,  $\gamma\in C^{l+2,\frac{l+2}{2}}(\partial M_T)$ and $\tilde g\in C^{l+2,\frac{l+2}{2}}(\overline{M_T})$ then, for $\tau\in (0,T)$,  $g\in C^{l+2,\frac{l+2}{2}}(M\times [\tau, T])$.
\end{enumerate}
\end{theorem}

\begin{proof}
Given a Riemannian metric $g_{ij}$ on a domain in $\mathbb R^{n+1}$ we can define the differential operator
$$\mathcal L(\partial_t,\partial_x,g_{ij})(u)_{kl}=\partial_t(u_{kl})-g^{ij}\partial_i\partial_j(u_{kl}),$$
acting on symmetric 2-tensors $u_{kl}$.

In a coordinate system the solution $g(x,t)$ of the Ricci-DeTurck flow satisfies parabolic equation of the form
\be
\mathcal L(\partial_t,\partial_x,g_{ij})(g)_{kl}=\mathcal S(g,\partial g,\tilde g,\partial\tilde g,\partial^2 \tilde g)_{kl}, \label{eq}
\ee
hence, standard interior regularity theory implies Part 1 of the theorem.\\

It remains to study the regularity of $g$ at a neighbourhood of the boundary, under the assumptions of the theorem. \\

We need to establish some notation first. Let $\phi: U\rightarrow \phi(U)\subset\mathbb R^{n+1}$ be any smooth chart on a domain $U$ intersecting the boundary of $M$, such that $\phi(U\cap\partial M)=\phi(U)\cap\{x^0=0\}$, and let $g_{ab}$, $\gamma_{\varepsilon\sigma}$ be symmetric positive definite $(n+1)\times (n+1)$ and $n\times n$ matrices respectively. Define the following differential operators
\begin{eqnarray}
B(\partial_x,g_{ab})(u)_{i}&=&g^{pq}\partial_p(u_{qi})-\frac{1}{2}g^{pq}\partial_i(u_{pq}),\nonumber\\
H(\partial_x,g_{ab})(u)&=&g^{T,\alpha\beta}\nu^i\partial_i(u_{\alpha\beta})+\nonumber\\&& \left(\frac{2g^{0l}g^{\alpha k}}{\sqrt{g^{00}}}-\frac{g^{0l}g^{0k}g^{0\alpha}}{(\sqrt{g^{00}})^3}+\frac{g^{T,\alpha\beta}g_{0\beta}g^{0l}g^{0k}}{\sqrt{g^{00}}}\right)\partial_{\alpha}(u_{kl}),\nonumber\\
C(\gamma_{\varepsilon\sigma})(u)_{\alpha\beta}&=&u_{\alpha\beta}-\frac{\gamma^{\mu\nu}\gamma_{\alpha\beta}}{n}u_{\mu\nu}.\nonumber
\end{eqnarray}

Now, take any $p\in\partial M$, and consider a smooth coordinate system as the above with $\phi(p)=0$ and $g_{ij}(0)|_{t=0}=\delta_{ij}$.

By Lemma \ref{local},  $2\mathcal H(g)=H(\partial_x, g_{ab}(x,t))(g)$. Thus, in addition to (\ref{eq}), $g_{ij}(x,t)$ satisfy the following conditions on $\phi(U)\cap\{x^0=0\}$.  
\begin{eqnarray}
B(\partial_x, g_{ab}(x,t))(g)_i&=&g^{pq}g_{ri}\tilde\Gamma^r_{pq},\nonumber\\
H(\partial_x, g_{ab}(x,t))(g)&=&2\eta(x,t),\label{conditions}\\
C(\gamma_{\varepsilon\sigma})(g)_{\alpha\beta}&=&0,\nonumber
\end{eqnarray}
and the initial condition $g_{ij}|_{t=0}= g^0_{ij}$.\\

Notice that after ``freezing" the coefficients at $x=0,t=0$, the operators $\mathcal L(\delta_{ab})$, $B(\delta_{ab})$, $H(\delta_{ab})$, $C(\delta_{\varepsilon\sigma})$, satisfy the complementing condition, as the computation in Theorem \ref{linear} shows. The openness of this condition implies that  the same is true for the operators $\mathcal L(g_{ab})$, $B(g_{ab})$, $H(g_{ab})$, $C(\gamma_{\varepsilon\sigma})$, as long as $g_{ab}$ and $\gamma_{\varepsilon\sigma}$ are close to $\delta_{ab}$.

We will extend (\ref{eq}), (\ref{conditions}) to a parabolic boundary value problem on $\mathbb R^{n+1}_+$ using a smooth cutoff function $0\leq\eta\leq 1$ on $\mathbb R^{n+1}_+$ supported in  a ball $B^+(0,r)$ such that $\eta|_{B^+(0,r/2)}\equiv 1$. For this, define the metrics
$$a_{ab}(x,t)=\eta g_{ab}(x,t)-(1-\eta)\delta_{ab}$$
on $\mathbb R^{n+1}_+$, and 
$$\alpha_{\varepsilon\sigma}=\eta \gamma_{t,\varepsilon\sigma} +(1-\eta)\delta_{\varepsilon\sigma}$$ 
on $\mathbb R^n$. Then, choosing $r,\tau>0$ small enough, the operators $\mathcal L(a_{ab}(x,t))$, $B(a_{ab}(x,t))$, $H(a_{ab}(x,t))$, $C(\alpha_{\varepsilon\sigma}(x,t))$ will satisfy the complementing  condition, defining a parabolic boundary value problem on $\mathbb R^{n+1}_+\times [0,\tau]$.

Now, let $0\leq\zeta\leq 1$ be a smooth cutoff function supported in $B^+(0,r/2)$, $\zeta|_{B^+(0,r/3)}\equiv 1$, and set $v=\zeta g$.

Then $v$ satisfies the equation
$$\mathcal L(\partial_t,\partial_x,a_{ij})(v)_{kl}=\zeta \mathcal S_{kl}-g^{ij}\partial_i\partial_j(\zeta)g_{kl}-2g^{ij}\partial_i(\zeta)\partial_j(g_{kl})$$
and the boundary conditions
\begin{eqnarray}
B(\partial_x,a_{ab}(x,t))(v)_i&=&\left(1-\frac{n}{2}\right)\partial_i(\zeta)+\zeta g^{pq}\tilde\Gamma^r_{t,pq}g_{ri},\nonumber\\
H(\partial_x,a_{ab}(x,t))(v)&=&2\zeta\eta+n\nu^i\partial_i(\zeta)+\nu^\alpha\partial_\alpha(\zeta)+\frac{g_T^{\alpha\beta}g_{0\beta}g^{0l}g^{0k}g_{kl}}{\sqrt{g^{00}}}\partial_{\alpha}(\zeta),\nonumber\\
C(\alpha_{\varepsilon\sigma})(v)_{\alpha\beta}&=&0\nonumber
\end{eqnarray}
on $\mathbb R^{n+1}_+$. Therefore, by Theorem 5.4 of \cite{MR0211083},  we can prove that $g\in C^{l+2,\frac{l+2}{2}}(\overline M\times [0,\tau])$ for some small $\tau>0$, since $\zeta g^0_{kl}$ satisfies the necessary higher order compatibility conditions as long as $g^0_{kl}$ does.

 At this point we should observe that for $g\in\sob$, the metrics $g(t)$ are uniformly equivalent and satisfy a uniform H\"{o}lder condition in the $t$ direction. Therefore, one can iterate the argument above to show boundary regularity up to time $T$.
 
The regularity up to the boundary for positive time follows in a similar manner, using the local estimates provided by Theorem 5.7 in \cite{MR0211083} (which are the analogous estimates to those in Chapter 4 of \cite{MR0241822}).
\end{proof}

\subsubsection{Regularity of the DeTurck vector field}
According to the following Proposition, the DeTurck vector field $\mathcal W$ can gain one derivative, without requiring all the compatibility conditions needed to increase the regularity of $g$. Only higher order compatibility of the initial data with the reference metrics is needed. Thus, it can be assumed to be as smooth as the solution to the Ricci-DeTurck flow.
\begin{proposition}\label{reg3}
Let $g\in C^{l,l/2}(\overline{M_T})$ be a solution of the Ricci-DeTurck equation, $l>3$. Assume that further that $\tilde g$ is in $C^{l+1,\frac{l+1}{2}}(\overline{M_T})$ and that the compatibility condition $h_k=\partial^k_t\tilde g_t|_{t=0}$ is satisfied for $k\leq\left[\frac{l+1}{2}\right]$. Then, the DeTurck vector field $\mathcal W$ is in $C^{l,l/2}(\overline{M_T})$.
\end{proposition}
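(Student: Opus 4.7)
The plan is to show that $\mathcal{W}$ solves a second-order uniformly parabolic system with zero Dirichlet boundary and initial data, so that standard parabolic Schauder theory produces a gain of one derivative over the naive $C^{l-1,(l-1)/2}$ regularity coming from the algebraic definition of $\mathcal{W}$.

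Differentiating $\mathcal{W}^k = g^{pq}(\Gamma(g)^k_{pq} - \Gamma(\tilde{g})^k_{pq})$ in $t$ and substituting the Ricci-DeTurck equation $\partial_t g_{ij} = -2\Ric(g)_{ij} + \nabla_i \mathcal{W}_j + \nabla_j \mathcal{W}_i$, one obtains after the usual regrouping---the same manipulation that underlies the parabolicity of the Ricci-DeTurck flow, namely the cancellation of third-spatial-derivative terms in $g$ between the Ricci and DeTurck contributions via the contracted Bianchi identity $g^{ij}\nabla_i\Ric_{jk}=\tfrac12\nabla_k R$---an equation of the form
\[
\partial_t \mathcal{W}^k - g^{pq}\nabla_p\nabla_q \mathcal{W}^k = F^k(g, g^{-1}, \partial g, \tilde{g}, \partial\tilde{g}, \partial^2\tilde{g}, \mathcal{W}, \nabla\mathcal{W}),
\]
where $F$ depends rationally on $g, g^{-1}, \tilde{g}$, polynomially on the other arguments, and is of at most first order in $\mathcal{W}$. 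Since $g\in C^{l,l/2}$, the leading coefficients $g^{pq}$ lie in $C^{l,l/2}$ and are uniformly positive definite. Using $\tilde{g}\in C^{l+1,(l+1)/2}$ one has $\partial^2\tilde{g}\in C^{l-1,(l-1)/2}$, the a priori $\mathcal{W}\in C^{l-1,(l-1)/2}$ yields $\nabla\mathcal{W}\in C^{l-2,(l-2)/2}$, and $\Ric(g)\in C^{l-2,(l-2)/2}$. Consequently $F\in C^{l-2,(l-2)/2}(\overline{M_T})$.

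The boundary condition for $\mathcal{W}$ is the Dirichlet condition $\mathcal{W}|_{\partial M\times[0,T]}=0$ from $(\ref{rdtbdata})$. At $t=0$, $\mathcal{W}$ vanishes identically on $M$ because $\tilde{g}_0 = g^0$. Moreover, the assumption $\partial_t^k \tilde{g}_t|_{t=0} = h_k = \partial_t^k g|_{t=0}$ for $k\le [\tfrac{l+1}{2}]$ means that $g$ and $\tilde{g}$ agree to that order in their time Taylor expansions at $t=0$. Since $\mathcal{W}$ is a rational/polynomial expression in $g, g^{-1}, \partial g, \tilde{g}, \partial\tilde{g}$ that vanishes identically whenever $g=\tilde{g}$, we conclude $\partial_t^k \mathcal{W}|_{t=0}\equiv 0$ on all of $M$ for $k\le [\tfrac{l+1}{2}]$. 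This pointwise vanishing on $M$ is strictly stronger than the Dirichlet compatibility at the corner $\partial M\times\{0\}$ required for $C^{l,l/2}$ regularity. Therefore the classical Schauder theory for the Dirichlet problem for second-order parabolic systems (Chapter IV of \cite{MR0241822}, or \cite{MR0211083}) produces a solution in $C^{l,l/2}(\overline{M_T})$, which by uniqueness coincides with the original $\mathcal{W}$.

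The principal obstacle is in the first step: one must verify carefully that in the derivation of the equation for $\partial_t\mathcal{W}$, every term involving three spatial derivatives of $g$ either combines on the left-hand side into $g^{pq}\nabla_p\nabla_q\mathcal{W}$ or cancels by the contracted Bianchi identity, so that the right-hand side $F$ really lives in $C^{l-2,(l-2)/2}$ rather than merely $C^{l-3,(l-3)/2}$. This is the point at which the structure of the DeTurck gauge is used in an essential way; once this is in hand, the Schauder estimate and compatibility check are routine.
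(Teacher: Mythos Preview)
Your proposal is correct and follows essentially the same strategy as the paper: derive a second-order parabolic equation for $\mathcal W$ with Dirichlet data, check the compatibility at $t=0$ coming from $h_k=\partial_t^k\tilde g|_{t=0}$, and invoke Schauder theory. The paper's derivation is slightly cleaner than yours: rather than differentiating $\mathcal W$ directly and tracking the third-order cancellations by hand, it applies the Bianchi operator $\beta_g$ to both sides of the Ricci--DeTurck equation, so that $\beta_g(\Ric)$ vanishes immediately and the identity $\beta_g(\mathcal L_{\mathcal W}g)=\Delta\mathcal W+\Ric(\mathcal W)$ produces the parabolic operator in one stroke; the resulting equation $\partial_t\mathcal W=\Delta\mathcal W+\Ric(\mathcal W)+Q$ with $Q$ of order two in $g$ is exactly what you are aiming for, and makes the ``principal obstacle'' you flag transparent.
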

\begin{proof}
Applying the Bianchi operator $\beta_g=\dive_g-\frac{1}{2}d\tr_g$ in both sides of the Ricci-DeTurck equation we get
$$\beta_g (\partial_t g)=\beta_g(\mathcal L_{\mathcal W}g).$$
Commuting derivatives we obtain
$$\beta_g(\mathcal L_{\mathcal W}g)=\Delta \mathcal W + \Ric(\mathcal W),$$
where $\Delta =\tr_g \nabla^2$ and $\Ric(\mathcal W)=\Ric(\mathcal W,\cdot)$.

By the linearization formula of $\mathcal W$ (\ref{VFlinearization}) we get that 
$$\partial_t \mathcal W_b=\beta_g(\partial_t g)_b-g_{br}U_{ij}g^{ip}g^{jq}\left(\Gamma_{pq}^r-\tilde\Gamma_{t,pg}^r\right)-g^{pq}\partial_t (\tilde\Gamma_{t,pq}^r),$$
with $U_{ij}=-2\Ric_{ij}+\mathcal L_{\mathcal W}g_{ij}$.

Combining the above we get the following evolution equation for $\mathcal W$
$$\partial_t \mathcal W=\Delta \mathcal W +\Ric(\mathcal W) +Q,$$
where $Q$ is an expression involving at most two derivatives of the metric. By parabolic regularity, given the Dirichlet boundary condition $\mathcal W|_{\partial M}=0$ and the validity of the compatibility conditions at $t=0$ it follows that $\mathcal W\in C^{l,l/2}(M\times [0,T])$ as long as $g\in C^{l,l/2}(M\times [0,T])$.
\end{proof}

\subsection{Uniqueness.}
Let $g_1,g_2\in \sob$ be two solutions to the Ricci-DeTurck boundary value problem (\ref{eq1}),(\ref{rdtbdata1})-(\ref{rdtbdata3}) satisfying the same initial and boundary data. Choosing $K>0$ such that $g_i\in M_K^T(g^0)$, there is a $\hat\tau>0$ such that the map $S$ defined in the proof of the existence Theorem is a contraction map of $M_K^{\hat\tau}(g^0)$ to itself, and therefore has a unique fixed point. Since $g_1$, $g_2$ are both fixed points, they have to agree on $[0,\hat\tau]$. Assuming the data are smooth enough to guarantee that $g_i(t)$ are $C^2$ for $t>0$, one can apply the same argument regarding $t_0$ as initial time. Then, an open-closed argument concludes that $g_1\equiv g_2$ on $[0,T]$.\\

\noindent The results above finally complete the proof of Theorem \ref{Ricci-DTrk}.
\section{The boundary value problem for the Ricci flow}\label{rflow}
Let $ g^0$ be a smooth Riemannian metric on a compact Riemannian manifold with boundary $M$, $\gamma_t$ be a smooth family of metrics of the boundary, and $\eta$ a smooth function on $\partial M\times [0,+\infty)$. We assume that they satisfy the zero order compatibility condition (\ref{zeroorder}). The aim is to study the existence and regularity of a Ricci flow evolution of $ g^0$  on $M$, such that the conformal class of the boundary metric is $[\gamma_t]$ and the mean curvature of the boundary is $\eta$. The existence will follow by the standard argument of pulling back a solution of the Ricci-DeTurck flow by a family of diffeomorphisms. However the issue of how smooth this family is at the corner $\partial M\times 0$ of the parabolic domain will become relevant,  as it may be only $C^0$  despite being smooth everywhere else.  Theorem \ref{Ricci_flow} describes how this phenomenon affects the existence and regularity. Before discussing the proof we make some remarks on the regularity of a solution to the Ricci flow $g(t)$ with the boundary conditions under consideration.

As it was shown in  Subsection \ref{comp}, certain higher order compatibility conditions among the initial and boundary data are necessary for the regularity of the Ricci-DeTurck flow on the corner $\partial M\times 0$. Naturally, such obstruction to regularity appears in any evolution initial-boundary value problem, and so does for the Ricci flow.

For instance, for the boundary value problem (\ref{ricciflow}), (\ref{bdata}), (\ref{idata}), the compatibility conditions (\ref{compatibility_conformal}), (\ref{compatibility_mean_curvature}), with $h_1=-2\Ric(g^0)$, are needed for a $C^2$ or $C^3$ solution to exist. Notice that these compatibility conditions are exclusively formulated in terms of the Ricci tensor of the initial metric. More generally, differentiating the boundary conditions with respect to time,  and using $[\gamma]=[g^T]$, we get
$$
\Ric^T-\frac{\tr_{g^T}\Ric^T}{n}g^T=f\gamma+\frac{\tr_{\gamma}g^T}{n}\dot\gamma.
$$
Also, the mean curvature condition gives,
$$\mathcal H'_g(\Ric)=-\frac{1}{2}\dot\eta.$$
Using the evolution equation of the Ricci tensor under Ricci flow (see for instance \cite{MR2274812}), and the contracted second Bianchi identity we observe that $\Ric$ satisfies the following boundary value problem
\be
\partial_t \Ric=\Delta_L \Ric,\qquad\text{on $M$}\label{Evolution_Ricci}
\ee
where $\Delta_L$ is the Lichnerowicz Laplacian, and on $\partial M$
\begin{eqnarray}
\Ric^T-\frac{\tr_{g^T}\Ric^T}{n}g^T&=&f\gamma+\frac{\tr_{\gamma^T}g^T}{n}\dot\gamma,\nonumber\\
\mathcal H'_g(\Ric)&=&-\frac{1}{2}\dot\eta,\label{boundary_cond_Ricci}\\
\beta_g(\Ric)&=&0.\nonumber
\end{eqnarray}
Notice that the computation in Theorem \ref{linear} shows that it satisfies the complementing condition and  is parabolic. 

Now, since  $\partial^k_t g=-2\partial^{k-1}_t\Ric$, it follows that the compatibility conditions on $g(0)$ needed for $g(t)\in C^k(\overline{M_T})$, with $k> 3$,  are the same as those for $\Ric\in C^{k-2}$ , satisfying (\ref{Evolution_Ricci}),  (\ref{boundary_cond_Ricci}). Note also that by the contracted second Bianchi identity the compatibility conditions of any order hold for the last boundary condition.

For example $\Ric\in C^2(\overline{M_T})$ and $g\in C^4(\overline{M_T})$ require an additional compatibility condition between $\left(\Delta_L\Ric\right)^T$ and $\ddot\gamma|_{t=0}$, which, in the simple case that the conformal class stays fixed along the flow, will be 
$$\left(\Delta_L\Ric(g^0)\right)^T=\rho g^{0,T}$$
for some function $\rho$ on $\partial M$.\\

In the rest of this section we will proceed in the proof of Theorem \ref{Ricci_flow}. However, we will first state and prove the following lemma. This will enable us to provide a lower bound on the existence time of the Ricci flow  in terms of geometric bounds. We first need a few definitions.
\begin{definition}\label{inj_radii}
Let $(M,g)$ be a Riemannian manifold with boundary.
 \begin{enumerate}
  \item We will say that the injectivity radius $i_g$ of $(M,g)$ satisfies the inequality $i_g \geq i_0$ if at any $x\in M\setminus \partial M$ the exponential map restricted to any $B_\rho \subset T_xM$, $\rho\leq \min\{i_0,\frac{1}{2} dist_g (x,\partial M)\}$, is a diffeomorphism onto its image.
  \item The boundary injectivity radius $i_{b,g}$ of $(M,g)$ is the maximal $\rho>0$ such that a $\rho$-tubular neighbourhood of $\partial M$ is diffeomorphic to $\partial M\times [0,\rho)$ via the normal exponential map of $\partial M$.
 \item We will denote by $i_{g^T}$ the injectivity radius of the boundary.
 \end{enumerate}
\end{definition}

\begin{lemma}\label{reparametrize}
 Let $M$ be a compact, smooth, manifold with boundary and fix $\gamma$ a smooth Riemannian metric on $\partial M$. Given any Riemannian metric $g$ on $M$ such that $[g^T]=[\gamma]$ satisfying 
\begin{eqnarray}
\sup_M|\Ric(g)|_g+\sup_{\partial M}|\Ric(g^T)|_{g^T}&\leq& C, \label{curv_bound}\\
i_g,  i_{g^T},  i_{b,g} &\geq& C^{-1}, \\
\textrm{diam}(M,g)&\leq& C, \\
|\gamma|_{1+\epsilon}+ |\gamma^{-1}|_0+\sup_{\partial M} |R(\gamma)|+|\mathcal H(g)|_{\epsilon}&\leq& C, \label{gamma_H_control}\\
C^{-1}\gamma\leq\;\; g^T& \leq &C\gamma \label{uniformity}
\end{eqnarray}
for some $C>1$, there exists $K=K(C)>0$ and a smooth ($C^\infty$) diffeomorphism $\phi$ of $M$ such that $|\phi^* g|_{1+\epsilon}+||\phi^* g ||_{W^{2,p}(M)} \leq K$. Moreover, $\phi|_{\partial M}$ is uniformly controlled in $C^{2,\epsilon}$ in terms of $C$.
\end{lemma}
\begin{proof}
The existence of $K>0$ and $\phi$ such that $\phi^* g$ is $C^{1,\epsilon}$-controlled is a consequence of the $C^{1,\alpha}$-compactness result in \cite{AndTaylor}. To obtain the uniform control of $\phi|_{\partial M}$ in $C^{2,\epsilon}$ it suffices to show that 
$g^T$ is also controlled in $C^{1,\epsilon}$.

By assumption, there exists a smooth function $u$ on $\partial M$ such that $g^T=u^{\frac{4}{n-2}} \gamma$. Here we assume that $n>2$, although the argument for $n=2$ is similar. It is known that $u$ satisfies an elliptic equation of the form 
$$a\Delta u +R(\gamma)u-R(g^T)u^{\frac{n+2}{n-2}}=0,$$
where $a=a(n)$ and $R(\gamma)$, $R(g^T)$ denote the scalar curvatures functions of $\gamma$ and $g^T$ respectively. Also, $\Delta$ denotes the Laplacian with respect to the uniformly equivalent and controlled in $C^{1,\epsilon}$ metric $\gamma$. 

By (\ref{curv_bound}) and (\ref{uniformity}) we obtain uniform bounds on $u$ and $R(g^T)$. Thus, by elliptic regularity $u$ (hence $g^T$) is controlled in $C^{1,\epsilon}$.

 Next, we show that  $h=\phi^*g$ is uniformly bounded in $W^{2,p}(M)$. Set $\bar \gamma=\phi^* \gamma$. Then,  by the uniform $C^{2,\epsilon}$ control of $\phi|_{\partial M}$, (\ref{gamma_H_control}) and (\ref{uniformity}) hold for $h$ and $\bar \gamma$ (maybe with a worse constant $C$). Moreover,  $[h^T]=[\bar \gamma]$, and  $\bar\gamma$, $\mathcal H(h)=\phi^* \eta$ are controlled in $C^{1,\epsilon}$ and $C^{\epsilon}$ respectively. 
 
 The $C^{1,\epsilon}$-control of $h$ implies that harmonic coordinates (or boundary harmonic coordinates in the sence of \cite{AndTaylor}) of $(M,h)$ are $C^{2,\epsilon}$-controlled. Moreover, the harmonic radius is bounded below by \cite{AndTaylor}.
 
 The Ricci tensor becomes elliptic in such coordinates, and standard elliptic regularity provides the interior $W^{2,p}$ control. Similarly, looking at boundary harmonic coordinates, the control of the conformal class of $h^T$ and of the mean curvature $\mathcal H(h)$ of the boundary imply the $W^{2,p}$-control up to the boundary. Note that $\bar \gamma$ is controlled only in $C^{1,\epsilon}$ and not in $C^2$, as required for the $L_p$ estimates in \cite{ADN}. However, the results in \cite{MR0211083} are enough to provide the necessary estimate.
 \end{proof}

\begin{remark}
 The lemma still holds if we replace the mean curvature bound in (\ref{gamma_H_control}) by a bound on $|\eta|_{C^2}$, where $\eta$ is a $C^2$ function such that
\begin{eqnarray}
 \mathcal H(g)=\eta(x,t,g^T,(g^T)^{-1}).\label{mesi_kampylotita}
\end{eqnarray}
This is because, as is demonstrated in the proof above, the bounds (\ref{curv_bound}), (\ref{gamma_H_control}) and (\ref{uniformity}) imply that $g^T$ is controlled in $C^{1,\epsilon}$. Hence, by (\ref{mesi_kampylotita}), we obtain a uniform bound for $|\mathcal H(g)|_\epsilon$.
\end{remark}

\begin{proof}[Proof of Theorem \ref{Ricci_flow}]
We begin by applying Lemma \ref{reparametrize} with $\gamma|_{t=0}$ in place of the metric $\gamma$ and $g^0$ in place of $g$ to obtain a smooth diffeomorphism $\phi$ satisfying the conclusions of that lemma. It suffices to prove the theorem replacing the initial data $g^0$ by $\phi^*g^0$, and the boundary data $\gamma$ and $\eta$ by $\phi^* \gamma$ and $\phi^* \eta$ respectively. To simplify the notation, we will still use $g^0,\gamma,\eta$ to denote these modified initial-boundary data.\\

By Theorem \ref{Ricci-DTrk}, choosing a family of smooth background metrics $\tilde g$, there exists a solution $\hat g(t)$ to the Ricci-DeTurck boundary value problem (\ref{eq1}), (\ref{rdtbdata1})-(\ref{rdtbdata3}), which is in $C^\infty\left(\overline{M_T}-(\partial M\times 0)\right)$ and in $C^{1+\alpha,\frac{1+\alpha}{2}}(\overline{M_T})$ if no other higher order compatibility conditions hold.\\

The DeTurck vector field $\mathcal W(\hat g(t),\tilde g)$ is also in $C^\infty(\overline{M_T}-(\partial M\times 0))$.Then, for some $\varepsilon >0$, the ODE
\begin{eqnarray}
\frac{d}{dt}\psi&=&-\mathcal W\circ\psi, \nonumber\\
\psi_\varepsilon&=&id_M.\label{diffeos}
\end{eqnarray}
defines a unique smooth flow $\psi_t$ for $t>0$, which extends at $t=0$ continuously up to the boundary, and smoothly in the interior.

Then, $g(t)=\psi_t^*\hat g(t)$ solves the Ricci flow equation (see for instance \cite{MR2274812}). Moreover, since the diffeomorphisms $\psi_t$ fix the boundary, and the mean curvature and conformal class are invariant under such diffeomorphisms, it follows that $g(t)$ satisfies the boundary conditions (\ref{rdtbdata2})-(\ref{rdtbdata3}).

Since $\left(\psi_t^{-1}\right)^*g(t)=\hat g(t)$ and $\hat g(t)\rightarrow  g^0$ in the $C^{1,\alpha}$ sense as $t\rightarrow 0$, we get that $g(t)\rightarrow g^0$ in the geometric $C^{1,\alpha}$ sense.

Now, assume that the (unmodified) data $g^0, \gamma, \eta$ satisfy the higher order compatibility conditions necessary for the Ricci tensor to be in $C^k(\overline{M_T})$ and the metric $g$ in $C^{k+2}(\overline{M_T})$ under the Ricci flow. Observe that the modified data $\phi^* g^0, \phi^*\gamma,\phi^*\eta$  satisfy these compatibility conditions too, and to simplify notation we will again denote them by $g^0,\gamma,\eta$.

We need similar compatibility conditions to hold for the Ricci-DeTurck flow, in order to improve the regularity of $\hat g$. In general we don't expect them to hold for an arbitrary choice of background metrics $\tilde g$, so we have to choose them carefully.

As the discusion in Subsection \ref{comp} shows,  the time derivatives at $t=0$ of  solutions $g, \hat g$ to the Ricci flow and Ricci-DeTurck flow respectively
$$h_k=\partial_t^k g|_{t=0},$$
$$\hat h_k=\partial_t^k \hat g|_{t=0},$$
are completely specified by the initial data $g^0$ and the background metrics $\tilde g$ in the case of $\hat h_k$. Observe that if $\tilde g_t$ is chosen so that $\partial_t\tilde g|_{t=0}=0$ and $\partial_t^k\tilde g|_{t=0}=h_k$ for $k>1$, we get
\be\hat h_k=h_k.\label{galign}
\ee
To see this, note that $\hat h_l$ is determined, through the equation, by  $\hat h_0,\ldots,\hat h_{l-1}$ and $\partial_t^k\tilde g|_{t=0}$ for $k<l$. Thus, assuming that  (\ref{galign}) holds for $k<l$ we get
$$\partial_t^k(\mathcal L_{\mathcal W(\hat g,\tilde g)}\hat  g)|_{t=0}=0,\qquad\textrm{for $k<l$}$$
since $\partial_t^k\mathcal W|_{t=0}=0$ for $k<l$. Note that $\partial_t\mathcal W|_{t=0}=0$, by the contracted second Bianchi identity. 
Then, we compute
$$\hat h_l=\partial_t^{l-1}(-2\Ric(\hat g)+\mathcal L_{\mathcal W(\hat g,\tilde g)}\hat  g)|_{t=0}=\partial_t^{l-1}(-2\Ric(\hat g))|_{t=0}=F(\hat h_1,\ldots,\hat h_{l-1})$$
for some expression $F$. On the other hand, 
$$h_l=\partial_t^{l-1}(-2\Ric(g))|_{t=0}=F(h_1,\ldots,h_{l-1})$$
for the same expression $F$. Hence, (\ref{galign}) follows by induction, since $\hat h_0=h_0=g^0$.

Now, (\ref{galign}) implies that higher order compatibility of the data of the Ricci flow boundary value problem imply higher order compatibility of the same order for the Ricci-DeTurck flow.

Theorem \ref{Ricci-DTrk} shows that $\hat g(t)$ is actually in $C^{k+2+\alpha,\frac{k+2+\alpha}{2}}(\overline{M_T})$, which immediately implies that $g(t)$ converges to $g^0$ in the geometric $C^{k+2,\alpha}$ sense.

Moreover, the regularity of $\psi$ in $M\times[0,T]$ is at least $C^{k+1}$, i.e. it has $t$ time and $s$ space derivatives for $2t+s\leq k+1$, since $\mathcal W$ is of first order on the metric. It follows that $\Rm(g(t))=\psi_t^*(\Rm(\hat g(t)))$ is in $C^k(\overline{M_T})$, and $\Rm(g(0))=\psi_0^*\Rm(\hat g(0))$.

By Proposition \ref{reg3}, if $k\geq1$, the DeTurck field and also $\psi$ is in $C^{k+2}$, therefore $g(t)\in C^{k+1}(\overline{M_T})$. Otherwise, if $k=0$, $g(t)\in C^0(\overline{M_T})$ (up to the boundary, at $t=0$).

The lower bound of the existence time $T>0$ is a consequence of the estimates provided by Lemma \ref{reparametrize} and the corresponding estimate for the Ricci-DeTurck flow, after the observation that the background metrics $\tilde g$ can be chosen so that
$$\sup_t\{\leftdn \tilde g(t)-g^0\rightdn_{W^{2,p}(M^o)}+\leftdn \partial_t \tilde g(t)\rightdn_{L_p(M^o)}\}\leq 1.$$
\end{proof}
\begin{remark}\label{bad1}
By parabolic theory,  necessary compatibility conditions are also sufficient to get higher regularity of a solution. However, the Ricci flow is not parabolic, and this is manifested by  loss of derivatives. On the other hand, the Ricci tensor satisfies a parabolic boundary value problem and, as predicted, the compatibility conditions give the expected smoothness.
\end{remark}
\begin{remark}\label{bad2}
Setting the initial condition $\psi|_{t=0}=id_M$  in (\ref{diffeos}), we obtain a solution to the Ricci flow satisfying $g(0)=g^0$. However,  the diffeomorphisms $\psi$  will have finite degree of regularity up to the boundary, even for $t>0$, depending on the compatibility of the data. Thus, $g(t)$  will also have finite regularity along $\partial M\times [0,T]$. This is in contrast to the behaviour of  solutions to parabolic boundary value problems, which become immediately smooth for $t>0$, as long as the boundary data are smooth.

The simple example of a rotationaly symmetric Ricci flow on the $n+1$ dimensional ball illustrates the situation. Consider metrics of the form  
$$g=\phi^2(r) dr^2 + \psi^2(r)ds_{n}^2$$
where $0<r\leq 1$ and $ds_{n}^2$ is the standard metric on $S^{n}$.  Notice that the symmetries imposed fix most of the gauge freedom, allowing only reparametrizations of the radial variable $r$. Under Ricci flow the evolution equations of $\phi$ and $\psi$ are (see \cite{Chow})
\begin{eqnarray}
\partial_t \phi &=& n\frac{\partial_s^2\psi}{\psi}\phi\label{evol1}\\
\partial_t \psi &=& \partial^2_s\psi -(n-1)\frac{1-(\partial_s\psi)^2}{\psi}\label{evol2}
\end{eqnarray}
where $\partial_s=\phi^{-1}\partial_r$. 

The diffeomorphism freedom in the $r$ direction is the reason that $\phi$ does not satisfy a parabolic equation and satisfies a transport-type equation instead. In case the initial and boundary data don't satisfy the first compatibility condition for the mean curvature, $\psi$ will be worse than $C^3(M\times [0,T])$ and the right hand side of (\ref{evol1}) will be worse than $C^1(M\times [0,T])$.  Equation $(\ref{evol1})$ doesn't enjoy the smoothing properties of a parabolic equation and, as an ODE in $t$, we can't expect smooth dependence on the initial data. Thus, low regularity of $g(t)$ at $\partial M\times 0$ can propagate in $\partial M\times \{t>0\}$.
\end{remark}
\smallskip

\subsection{Uniqueness of the Ricci flow.} We can now use the harmonic map heat flow for manifolds with boundary (see \cite{HamBoundary}) to establish the uniqueness of $C^3(\overline M_T)$ solutions to the Ricci flow boundary value problem under consideration, proving Theorem \ref{uniqueness}. Since the overall argument is standard (see \cite{MR2274812}),  we will just point out the necessary modifications to treat the case of manifolds with boundary.

Let $g_1(t), g_2(t)$ be two $C^3(\overline M_T)$ solutions to the Ricci flow satisfying the same initial and boundary conditions. Consider the following heat equations for maps $\phi_i: (M,g_i)\rightarrow (M,g^0)$:
\begin{eqnarray}
\frac{d\phi_i}{dt}&=&\Delta_{g_i(t),g^0}\phi_i\textrm{\quad in $M$},\label{harmonic1}\\
\phi|_{\partial M}&=&id_{\partial M}\textrm{\quad on $\partial M$},\label{harmonic2}
\end{eqnarray}
with initial condition
\be
\phi|_{t=0}=id_{M}.\label{harmonic2}
\ee

For integral $m>0$ and $p>n+3$ we can define the Sobolev spaces $W^{2m,m}_p(M_\varepsilon,M)$ of maps $f:M\rightarrow M$, by requiring the coordinate representations of $f$ with respect to an atlas of $M$ to be in $W^{2m,m}_p(M_\varepsilon)$. This space consists of the $L_p$ functions on $M_\varepsilon=M\times (0,\varepsilon)$ with the derivatives $\partial_t^r\hat\nabla_s$ in $L_p(M_\varepsilon)$ for $2r+s\leq 2m$. The space $W^{2m,m}_p(M_\varepsilon,M)$ does not depend on the atlas used for its definition, as long as $p>n+3$.

The results in Part IV,  Section 11 of \cite{HamBoundary} show that there exist solutions  $\phi_i\in W^{2,1}_p(M_\varepsilon,M)$, for small $\varepsilon>0$. The convexity assumption  of this result for the target $(M,g^0)$ is not needed here, since $\phi|_{t=0}=id_M$ and thus $\phi_i(t)$ remain diffeomorphisms of $M$ for small $t$. Also, by the theory in \cite{MR0241822} and \cite{MR0211083} these results hold under the current assumption for the regularity of $g_i(t)$.

Moreover, the first order compatibility condition for the boundary value problem (\ref{harmonic1})-(\ref{harmonic2}) holds since
 $$\frac{d\phi_i}{dt}=\Delta_{g_i(0),g^0}\phi_i(0)=\Delta_{g^0,g^0}id_M=0.$$
Thus, the diffeomorphisms $\phi_i(t)$  are in $W^{4,2}_p(M_\varepsilon,M)$.

Given the regularity of $\phi_i$ and $g_i$  we know that $\hat g_i=\left(\phi_i(t)\right)_* g_i(t)\in W^{2,1}_p(M_\varepsilon)$. Then, $\hat g_i$ satisfies the Ricci-DeTurck equation with background metric $g^0$ and  the geometric boundary data are still satisfied by $\hat g_i$ since $\phi_i$ fix the boundary. Also notice that the gauge condition 
$$\mathcal W(\hat g_i, g^0)|_{\partial M_T}=0$$ 
holds, since
\be \Delta_{g_i(t),g^0}\phi_i(t)=-\mathcal W(\hat g_i(t),g^0)\circ \phi_i(t)\label{harmcomp}\ee
and 
$$\Delta_{g_i(t),g^0}\phi_i(t)|_{\partial M_T}=0.$$
By the uniqueness of $W^{2,1}_p$ solutions of the Ricci-DeTurck boundary value problem, we have that $\hat g_1(t) = \hat g_2(t)$ and $\mathcal W(\hat g_1(t),g^0)=\mathcal W(\hat g_2(t),g^0)$ for $0\leq t\leq \varepsilon$. Now (\ref{harmcomp}) and (\ref{harmonic1}) imply that $\phi_1=\phi_2$, thus $g_1=\phi_1^*\hat g_1=\phi_2^*\hat g_2=g_2$.\\

This has the following corollary:
\begin{corollary}\label{iso_preserve}
If $\phi$ is an isometry of $g^0$ which preserves the boundary data, namely 
\begin{eqnarray}
\phi^*\eta(x,t)&=&\eta(x,t)\nonumber\\ 
\left[\phi^*\gamma(x,t)\right]&=&\left[\gamma(x,t)\right],\nonumber
\end{eqnarray}
and $g(t)$ is a solution to the corresponding Ricci flow boundary value problem then $\phi$ is an isometry of $g(t)$ for all $t$.
\end{corollary}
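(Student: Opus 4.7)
The plan is to apply the uniqueness Theorem \ref{uniqueness} to the pair of Ricci flows $g(t)$ and $\phi^* g(t)$ on $\overline{M_T}$, showing that they satisfy the same initial and boundary data. The diffeomorphism invariance of the Ricci tensor does most of the work.

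First, I would verify that $\phi^* g(t)$ is a solution of the Ricci flow boundary value problem. Since $\phi$ is an isometry of a smooth metric $g^0$ on a compact manifold with boundary, by Myers-Steenrod $\phi$ is smooth, and since the boundary is characterized intrinsically (e.g.\ as the set where the distance-to-$\partial M$ function vanishes), $\phi$ restricts to a diffeomorphism of $\partial M$. Then $\phi^*g(t)$ has the same regularity as $g(t)$, and by the naturality of the Ricci tensor under diffeomorphisms,
\[
\partial_t(\phi^* g) \;=\; \phi^*(\partial_t g) \;=\; -2\phi^*\Ric(g) \;=\; -2\Ric(\phi^* g).
\]

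Next, I would check that $\phi^* g(t)$ satisfies the same initial and boundary conditions as $g(t)$. At $t=0$,
\[
(\phi^* g)(0) \;=\; \phi^* g^0 \;=\; g^0
\]
because $\phi$ is an isometry of $g^0$. On $\partial M$, the mean curvature and the conformal class of the induced metric are diffeomorphism covariant (for diffeomorphisms preserving $\partial M$), so
\[
\mathcal{H}(\phi^* g(t)) \;=\; \phi^* \mathcal{H}(g(t)) \;=\; \phi^* \eta \;=\; \eta,
\]
\[
\bigl[(\phi^* g(t))^T\bigr] \;=\; \phi^* \bigl[g(t)^T\bigr] \;=\; \phi^* [\gamma] \;=\; [\gamma],
\]
where the last equalities in each line use the hypothesis that $\phi$ preserves the boundary data.

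Finally, I would invoke Theorem \ref{uniqueness}: both $g(t)$ and $\phi^* g(t)$ are $C^3(\overline{M_T})$ solutions of the same boundary value problem (with equal initial data $g^0$ and boundary data $([\gamma],\eta)$), hence $\phi^* g(t) = g(t)$ for all $t \in [0,T]$, which says precisely that $\phi$ is an isometry of $g(t)$. There is no real obstacle here; the only subtlety worth flagging is that $\phi$ must genuinely preserve $\partial M$ in order for the pullback of $\mathcal{H}$ and $[g^T]$ to make sense as the boundary data of $\phi^* g(t)$, but this is automatic for isometries of a compact manifold with boundary.
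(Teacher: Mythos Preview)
Your proof is correct and follows exactly the paper's approach: show that $\phi^* g(t)$ solves the same Ricci flow boundary value problem with the same initial and boundary data, then apply the uniqueness Theorem \ref{uniqueness}. Your version simply spells out more of the details (Myers--Steenrod, $\phi$ preserving $\partial M$, the covariance of $\mathcal H$ and $[g^T]$) that the paper leaves implicit.
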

\begin{proof}
It is a consequence of the diffeomorphism invariance of the Ricci flow equation and the uniqueness. If $\phi$ is an isometry of $g^0$ which preserves the boundary data and $g(t)$ is a solution of the Ricci flow boundary value problem then $\phi^* g(t)$ is also a solution with the same initial and boundary data. By uniqueness we obtain that $\phi^*g(t)=g(t)$.
\end{proof}

\subsection{A generalization.}
The methods used in the preceding sections can be applied to prove the following generalization of Theorem  \ref{Ricci_flow}, in which the mean curvature at any $t$ depends on the induced metric on $\partial M$ via a given smooth function $\eta(x,t,g^T,(g^T)^{-1})$.
\begin{theorem}\label{general}
Theorem \ref{Ricci_flow} holds if we replace the boundary condtion for the mean curvature with
\be \mathcal H(g)=\eta\left(x,t,g^T,(g^T)^{-1}\right).\label{new} \ee
The existence time is controlled from below in terms of (\ref{ex1})-(\ref{ex5}), but with the bound on $|\mathcal H(g^0)|$ replaced by a bound on $|\eta|_{C^2}$.
\end{theorem}
\begin{proof}
Regarding the short-time existence of the Ricci-DeTurck flow, estimates on the line of Corollary A.3 of  \cite{MR1163209} establish that the estimates of Lemmata \ref{selfmap} and \ref{lipconst}  remain valid when
\begin{eqnarray}
\widehat G_w&=&\mathcal H'_{g^0}(w-g^0)-(\mathcal H(w)-\mathcal H(g^0))+\nonumber\\
&&\eta(x,t,w^T,(w^T)^{-1})-\eta(x,0,g^{0,T},(g^{0,T})^{-1}) \nonumber\\
G_{w_1}-G_{w_2} &=&\mathcal H'_{g^0}(w_1-w_2)-(\mathcal H(w_1)-\mathcal H(w_2))+\nonumber\\&&\eta(x,t,w_1^T,(w_1^T)^{-1})-\eta(x,0,w_2^{0,T},(w_2^{0,T})^{-1})\nonumber
\end{eqnarray}
with the corresponding constants now controlled by the norm of $\eta(x,t,\cdot,\cdot)$.

The regularity theorems are still valid since the dependence of $\eta$ on $g^T$ is of zero order. Now, pulling back by the DeTurck diffeomorphisms we obtain a solution to the Ricci flow satisfying (\ref{new}). Finally, the arguments  in Sections 4 and 5 also establish the uniqueness for the Ricci-DeTurck  and the Ricci flow boundary value problem in this case.
\end{proof}

\section{On the existence time of the Ricci flow.}\label{example}

It is well known that on closed manifolds the norm of the curvature tensor of the initial metric controls the existence time of the Ricci flow from below. With this in mind, the geometric data required by Theorem \ref{Ricci_flow} seem quite strong and it would be interesting to understand if they can be relaxed.

 In this section we demonstrate that the lower bound on the boundary injectivity radius $i_{b,g^0}$ is in fact necessary, constructing appropriate examples.

 On $M=[0,1]\times S^1\times S^1$ consider  metrics of the form 
\begin{eqnarray}
g=\phi(x)^2dx^2+\psi(x)^2(dy^2+dz^2),\label{ansatz}
\end{eqnarray} 
where $x\in [0,1]$. Set $H:=2 \partial_s(\log \psi)$, which under Ricci flow evolves according to 
$$\partial_t H=\partial^2_s H+H \partial_s H-H^3.$$
In the above we denote $\partial_s=\phi^{-1}\partial_x$. Also, the distance $\mathcal L$ between the two boundary components evolves according to 
$$\frac{d\mathcal L}{dt}=H|_0^{\mathcal L}+\frac{1}{2}\int_0^{\mathcal L}H^2 ds.$$


Now, consider the flat metric $g_0=\varepsilon^2dx^2+dy^2+dz^2$ on $M$, and consider a Ricci flow with initial condition $g_0$, preserving (\ref{ansatz}) and satisfying 
\begin{eqnarray}
H(0,t)=-H(\mathcal L,t)=t^2. \nonumber
\end{eqnarray}
This corresponds to a Ricci flow starting from a $g_0$, with the mean curvature of the boundaries (with respect to the outward direction) equal to $-t^2$ and the conformal class fixed. Note that a first order compatibility condition holds for the mean curvature, hence we can assume that this flow is $C^3$. By Corollary  \ref{iso_preserve} such flow will preserve (\ref{ansatz}).\\


Assuming $t\leq 1$, the maximum principle shows that $|H|\leq 1$. Therefore,
$$\frac{d\mathcal L}{dt}=-2t^2+\frac{1}{2}\int_0^{\mathcal L} H^2 ds\leq -2t^2+\frac{1}{2}\mathcal L.$$
Solving this differential inequality we obtain, for each $\tau\leq 1$,
$$e^{-\frac{1}{2}\tau} \mathcal L(\tau)\leq \varepsilon -2\int_0^\tau t^2e^{-\frac{1}{2}t}dt.$$
Then,  for any $\varepsilon>0$ small there exists a time $T=T(\varepsilon)>0$ such that $\varepsilon -2\int_0^T t^2 e^{-\frac{1}{2}t}dt=0$, hence $\mathcal L(T)=0$. Therefore $T(\varepsilon)$ is an upper bound on the existence time and clearly  $\lim_{\varepsilon\rightarrow 0}T(\varepsilon)=0$.

\section{An extension condition for the Ricci flow.}\label{ext}

In this section we prove Theorem \ref{extend}. 
\begin{proof}[Proof of Theorem \ref{extend}]
Assume that $T<\infty$ and for some $K>0$ 
\begin{equation}\sup_{x\in M} |\Rm(g(t))|_{g(t)}+\sup_{x\in \partial M}|\mathcal A(g(t))|_{g(t)} \leq K \label{blow}\end{equation} 
for all $t<T$. This bound implies that $g(t)$ are uniformly equivalent and in addition that $g^T(t)$ have bounded curvature for $t<T$. 


Next, we observe that the interior injectivity radius $i_{g(t)}$, the injectivity radius of the boundary $i_{g^T(t)}$ and the ``boundary injectivity radius" $i_{b,g(t)}$ are uniformly bounded below for $t<T$. 

Since $g(t)$ are uniformly equivalent, for any $p\in M^o$ there exists a $r_0>0$ such that $dist_t(p,\partial M)\geq r_0$ for all $t<T$. This also shows that the volume ratio
$$\frac{Vol_t(B_t(p,r))}{r^{n+1}}\geq c$$
for all $r\leq r_0$ and $t<T$, which together with the curvature bound gives that  the injectivity radius at $p$ is bounded below. A similar argument controls the injectivity radius of the boundary. 

Moreover,  by comparison geometry the bounds on the curvature and the second fundamental form control the ``focal" distance of the boundary. Then, since the metrics are uniformly equivalent, the boundary cannot form ``self-intersections", hence the boundary injectivity radius $i_b$ is also bounded below.

Now, take a sequence $t_j\nearrow T$. Since $g(t)$ is a smooth flow (up to the boundary), $g(t_j)$ satisfy the compatibility conditions of any order with respect to $\gamma$ and $\eta$. Hence, by Theorem \ref{Ricci_flow} and the bounds above, there exist Ricci flows $h_j(t)$ satisfying the boundary conditions,  $h_j(t_j)=g(t_j)$ and existing for a uniform amount of time. By uniqueness, this implies that $h(t)=g(t)$ for $t\geq t_j$. However, for large $j$ this process constructs an extension of $g(t)$ past time $T$, which is a contradiction.

\end{proof}

\bibliographystyle{amsplain}
\bibliography{boundary_rfmwb}
\end{document}